\newtheorem{proposition}{Proposition}
\newtheorem{lemma}{Lemma}
\begin{document}

\title{The Square Root Velocity Framework for Curves in a Homogeneous Space}


\author{Zhe Su,\quad Eric Klassen,\quad Martin Bauer}
\address{Florida State University, Department of Mathematics, Tallahassee, FL}
\email{zsu@math.fsu.edu, klassen@math.fsu.edu, bauer@math.fsu.edu}
\thanks{Eric Klassen gratefully acknowledges the support of the Simons Foundation (Grant \# 317865).}

\begin{abstract}
In this paper we study the shape space of  curves with values in a 
homogeneous space $M = G/K$, 
where $G$ is a Lie group and $K$ is a compact Lie subgroup. We generalize the square root velocity 
framework to obtain a reparametrization invariant metric on the space of curves in $M$. 
By identifying curves in $M$ with their horizontal lifts in $G$, geodesics then can be 
computed.

We can also mod out by reparametrizations and by rigid motions of $M$. 
In each of these quotient spaces, we can compute Karcher means, geodesics, and perform 
principal component analysis. We present numerical examples including the analysis of a set of hurricane paths.
\end{abstract}
\maketitle

\section{Introduction}

The field of shape analysis is concerned with the mathematical description, comparison and analysis of geometric shapes. 
This has applications in a variety of fields in pure and applied mathematics. Examples include computational anatomy,  
medical imaging, computer vision and functional data analysis. Although the notion of  \emph{shape} varies widely depending on the specific application, many shape spaces share two common difficulties:  these spaces are usually non-linear and often infinite-dimensional. To deal with the resulting challenges the methods of (infinite-dimensional) Riemannian geometry have proved to be a successful approach. 

The notion of shape space that we adopt for the purpose of this article is the space of unparametrized curves with values in a homogeneous space $M$. Before we describe the contributions of the current work, we want to summarize previous work in this area.  We start by considering the case of smooth, open, regular curves with values in some Euclidean space $\mathbb R^d$:
\begin{equation}
	\operatorname{Imm}([0,1],\mathbb R^d):=\left\{c\in C^{\infty}([0,1],\mathbb R^d): c'\neq 0 \right\}.
\end{equation}
In the field of shape and functional data analysis one is usually not interested in the actual parametrization of the curves, but only in their \emph{geometric shape}. Mathematically the space of unparametrized curves (shapes) can be modeled as the quotient space
\begin{equation}
	\mathcal S([0,1],\mathbb R^d)=\operatorname{Imm}([0,1],\mathbb R^d)/\operatorname{Diff}^+([0,1])\;,
\end{equation} 
where $\operatorname{Diff}^+([0,1])$ denotes the group of smooth orientation preserving diffeomorphisms of the interval $[0,1]$ onto itself. 

For applications in shape analysis we want to define a distance or similarity measure on the space of un-parametrized curves. 
Towards this aim one can equip the space of all para\-metrized curves with a $\operatorname{Diff}^+([0,1])$-invariant metric and 
induce a Riemannian metric on the quotient space by requiring the quotient map $\pi$ to be a Riemannian submersion.  
By an invariant Riemannian metric on $\operatorname{Imm}([0,1],\mathbb R^d)$, we mean a Riemannian metric $G$ with the property that the re-para\-metriza\-tion group $\operatorname{Diff}^+([0,1])$ acts by 
isometries, i.e.,
\begin{equation}
	G_{c\circ\varphi}(u\circ\varphi, u\circ\varphi) = G_c(u,u)\,.\quad \forall \varphi \in \operatorname{Diff}^+([0,1])\;.
\end{equation}
Given an invariant metric $G$ on $\operatorname{Imm}([0,1],\mathbb R^d)$, the induced metric\footnote{The invariance property is a necessary but not a sufficient condition to induce a Riemannian metric on the quotient space. In the context of invariant metrics on spaces of curves, however, it has been shown that they indeed induce a smooth Riemannian metric on the quotient space.} on $\mathcal S([0,1],\mathbb R^d)$ is then defined as
\begin{equation}
	G_{\pi(c)}(u, u) = \inf_{\pi_{*c}(h) = u} G_c(h,h)\,.
\end{equation}
Thus the study of Riemannian metrics on the shape space $\mathcal S([0,1],\mathbb R^d)$ is reduced to the study of invariant metrics on the simpler space of parametrized curves. 

It came as a big surprise that the simplest such metric -- the reparametrization invariant $L^2$-metric -- induces vanishing geodesic distance, which renders it useless for applications in shape analysis, see \cite{MiMu2005,MiMu2006,BaBrHaMi2012}. To overcome this degeneracy several modifications of the $L^2$-metric have been introduced: Michor and Mumford \cite{MiMu2006} introduced metrics that are weighted by the curvature of the foot point curve $c$ and Shah \cite{Sh2008} 
and Mennucci and Yezzi \cite{YeMe2004} studied length weighted $L^2$-metrics. While overcoming the degeneracy of the geodesic distance, the existence of length minimizing curves for these metrics remains a delicate problem. It turned out to be a more promising approach to include (arc-length) derivatives of the tangent vector in the definition of the metric, yielding the class of Sobolev metrics. 
These metrics have received rigorous theoretical analysis and, in particular, there exist analytic results on local and global existence of geodesics \cite{MiMu2007,MeYeSu2008,BrMiMu2014}.

From an application point of view, a certain family of first order Sobolev metrics proved advantageous, as there exist isometric transformations to flat spaces allowing for explicit calculations of geodesics and geodesic distance \cite{MiSrJo2007,SrKlJoJe2011,SuMeSoYe2011}. This family of metrics, also called elastic metrics, can be written as:
\begin{equation}\label{elastic_metric}
G_c(h,h) =\int_0^1 a^2 |D_s h^N|^2+ b^2 | D_s h^T|^2 ds; 
\end{equation}
here $D_s$ and $ds$ denote differentiation and integration with respect to arc-length and let $D_s h^N$ (resp. $D_sh^T$) denote the components of $D_sh$ which are normal (resp. tangent) to the tangent vector $\dot c$ of the curve.
For $a=b$ and curves with values in $\mathbb R^2$, Younes et al. \cite{YoMiShMu2008} introduced the \emph{basic mapping} to represent this metric;  for the space of curves with values in general Euclidean space $\mathbb R^d$ Srivastava et al. \cite{SrKlJoJe2011} developed the SRV transform to represent the metric with $a=1$ and $b=\frac12$. These transformations have been generalized to arbitrary parameters $a$, $b$ in \cite{BaBrMaMi2014}. Using the SRV, efficient numerical calculations of geodesics have been developed and it also has given rise to rigorous results on the metric completion and the existence of minimizing reparametrizations \cite{BrMiMu2014}. In particular  it has been shown that the metric completion of $\operatorname{Imm}([0,1],\mathbb R^d)$ is the space of absolutely continuous functions $AC([0,1],\mathbb R^d)$ \cite{MeYeSu2008} and that in the case of PL curves \cite{LaRoKl2015}  and $C^1$ curves \cite{Bru2016}, optimal reparametrizations exist, leading to length-minimizing paths in shape space $\mathcal S([0,1],\mathbb R^d)$. 

Recently, there has been an effort to generalize these metrics (and in particular the SRV transform) for curves with values in a general Riemannian manifold $M$. Su et al. \cite{SuKuKlSr2014} introduced the TSRVF (transported square root velocity function), in which all SRVFs are parallel transported along geodesics to the tangent space at a single reference point $x\in M$. This method is computationally effective, but it has the disadvantage of introducing distortions for curves that venture far away from $x$, and the metric depends on the chosen reference point $x$. Zhang et al. \cite{ZhSuKlLeSr2015} introduced a different adaptation of the SRV, in which each path $\alpha:[0,1]\to M$ is represented by a path in the tangent space at its own initial point $\alpha(0)$; the velocity vectors are parallel translated along the path $\alpha$ itself to this initial point. The paths are then compared using a metric on the total space of the tangent bundle $TM$. This method avoids the distortion and arbitrariness of the TSRVF resulting from the choice of a reference point; however, the computations are much more difficult. Le Brigant \cite{LeArBa2015} introduced a more intrinsic metric on curves, defined pointwise along the curve. This method also avoids the arbitrariness and distortion of the TSRVF, but at a greater computational cost. 

In  \cite{CeEsSch2015}, Celledoni et al. adapted the SRV framework to the analysis of curves in a Lie group with a right-invariant metric. The basic idea is to use right translation to identify all tangent vectors to elements of the Lie algebra. The approach taken in the current paper is a generalization of this idea to curves in homogenous spaces. 

For the space $\operatorname{Imm}(N,M)$ of immersions between two possibly higher dimensional manifolds much less is known. Sobolev metrics thereon have been introduced in \cite{BaPhMi2011} and in the case of surfaces in $\mathbb R^d$ certain generalizations of the SRV framework have been studied in \cite{SaKuSrCa2014,JeKuKlSr2012,KuKlDiSr2010,KuKlGoDiSr2012,KuKlDiJaJaAvSr2011,KuJeXiKlLa2015}. 
 For more details on Riemannian metrics on spaces of curves and surfaces we refer to \cite{Younes2010,SrKl2016,BaBrMi2014,BaBrMi2016}.

{\bf Contributions of this paper:} We introduce a new generalization of the SRV transform for curves with values in a homogeneous space $M=G/K$, where $G$ is a Lie group and $K$ is a compact Lie subgroup. Many of the Riemannian manifolds that arise in applications can be viewed as homogeneous spaces, for example Euclidean spaces, spheres, Grassmannians, hyperbolic spaces, positive definite symmetric matrices, as well as all Lie groups. Compared to previous attempts, our approach has the advantage that it still yields explicit formulas for geodesics and geodesic distance -- computing a geodesic on the space of parametrized curves is equivalent to (1) computing a geodesic in $G$ and (2) performing an optimization over the compact group $K$. Our construction is based on first defining the SRV for curves with values in Lie groups \cite{CeEsSch2015} and then lifting the curve in $M$ to a horizontal curve in the Lie group $G$. We compare our metric with the metric that has been considered in \cite{ZhSuKlLeSr2015,LeArBa2015,Lebrigant2016} and show the effectiveness of our algorithms in numerical examples using hurricane paths, i.e., curves with values on the homogenous space $S^2$. In future work, we plan to generalize results on the existence of minimizing geodesics and optimal reparametrizations that are known to hold for curves in Euclidean spaces, to curves in homogeneous spaces.


\section{The SRV for the space of curves with values in a homogenous space}

Let $M=G/K$ be a homogeneous space, where $G$ is a Lie group and $K$ is a compact Lie subgroup of $G$. We will denote the Lie algebras of $G$ and $K$ by $\mathfrak{g}$ and $\mathfrak{k}$ respectively. 
Assume that $G$ is equipped with a left invariant Riemannian metric that is also bi-invariant with respect to $K$. This metric induces a Riemannian metric on $M$ that is invariant under the left action by $G$, see e.g. \cite{Petersen1998}. Furthermore we will denote the set of all absolutely continuous curves with values in a manifold $N$ by $AC([0,1], N)$ -- here $N$ will be either $M$ or $G$ --  and  by $\Gamma$ the group of orientation preserving reparametrizations, $\Gamma=\operatorname{Diff}^+([0,1])$. 

\subsection{Curves with values in  a Lie group $G$}
Following the SRVF (introduced by Srivastava et al. in \cite{SrKlJoJe2011}), we define the map 
\begin{align}
Q: AC([0,1], G)&\to G\times L^2([0,1],\mathfrak{g})\notag\\
Q(\alpha)&=(\alpha(0), q),
\end{align}
where 
\begin{align}
q(t)=\left\{  \begin{array}{lcr}
L_{\alpha(t)^{-1}}\dfrac{\alpha'(t)}{\sqrt{\|\alpha'(t)\|}} &\alpha'(t)\neq 0\\
0       &\alpha'(t)=0
\end{array} \right.
\end{align}
In this definition, the notation $L_{\alpha(t)^{-1}}$ refers to left translation applied to elements of $G$, as well as to tangent vectors. The $q$-map here is the same as the $q$-map defined in \cite{CeEsSch2015} (using right translation instead of left). We have the following proposition:
\begin{proposition}
The map $$Q:AC([0,1], G)\to G\times L^2([0,1],\mathfrak{g}),$$ defined above, is a bijection.
\end{proposition}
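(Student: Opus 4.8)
The plan is to construct an explicit inverse to $Q$. Given a pair $(g_0,q)\in G\times L^2([0,1],\mathfrak g)$, I want to produce the unique $\alpha\in AC([0,1],G)$ with $\alpha(0)=g_0$ and SRVF equal to $q$. The defining relation $q(t)=L_{\alpha(t)^{-1}}\alpha'(t)/\sqrt{\|\alpha'(t)\|}$ should be read as an ODE for $\alpha$: since the left-invariant metric makes $\|\alpha'(t)\|=\|L_{\alpha(t)^{-1}}\alpha'(t)\|$, writing $v(t)=L_{\alpha(t)^{-1}}\alpha'(t)\in\mathfrak g$ for the left logarithmic derivative, the relation becomes $q(t)=v(t)/\sqrt{\|v(t)\|}$, which inverts algebraically in each fiber to $v(t)=\|q(t)\|\,q(t)$ (and $v(t)=0$ when $q(t)=0$; note $\|v\|=\|q\|^2$, consistent with the square-root scaling). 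So the candidate preimage is the solution of the left-invariant (generally time-dependent) flow $\alpha'(t)=L_{\alpha(t)}\bigl(\|q(t)\|q(t)\bigr)$, $\alpha(0)=g_0$.

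The key steps, in order: (1) Verify the fiberwise algebra, i.e. that the map $v\mapsto v/\sqrt{\|v\|}$ on $\mathfrak g$ (extended by $0\mapsto 0$) is a bijection with inverse $q\mapsto\|q\|q$; this is an elementary radial computation. (2) Show the flow $\alpha'(t)=L_{\alpha(t)}(\|q(t)\|q(t))$ with $L^2$ — hence $L^1$ on $[0,1]$ — right-hand side has a unique absolutely continuous solution on all of $[0,1]$ with $\alpha(0)=g_0$. This is a Carathéodory-type existence/uniqueness statement: the vector field is smooth in the $G$-variable (left translation of a fixed Lie algebra element) and only measurable-and-integrable in $t$; global existence on the full interval follows because the metric is left invariant, so $\|\alpha'(t)\|=\|\|q(t)\|q(t)\|=\|q(t)\|^2$ is integrable and the curve has finite length, preventing escape in finite time (alternatively one notes completeness of $G$ or works in a chart via the standard Carathéodory theorem). (3) Check that this $\alpha$ indeed satisfies $Q(\alpha)=(g_0,q)$: by construction $\alpha(0)=g_0$, and differentiating shows $L_{\alpha(t)^{-1}}\alpha'(t)=\|q(t)\|q(t)$, so whenever $\alpha'(t)\neq 0$ we get $q(t)\neq 0$ and $L_{\alpha(t)^{-1}}\alpha'(t)/\sqrt{\|\alpha'(t)\|}=\|q(t)\|q(t)/\|q(t)\|=q(t)$, while $\alpha'(t)=0$ forces $q(t)=0$. (4) For injectivity of $Q$: if $Q(\alpha)=Q(\beta)$ then $\alpha(0)=\beta(0)$ and both solve the same flow, so uniqueness from step (2) gives $\alpha=\beta$; equivalently, the logarithmic derivative $L_{\alpha(t)^{-1}}\alpha'(t)$ is determined pointwise a.e. by $q$, and together with the initial value this determines $\alpha$.

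The main obstacle is step (2): getting global existence and uniqueness of the flow with an only-$L^2$ (in particular not continuous, not Lipschitz in $t$) driving term, and doing so on a manifold rather than in $\mathbb R^n$. The resolution is that the $t$-dependence enters merely as an integrable forcing term while the $G$-dependence is as regular as left translation, so the Carathéodory existence theorem applies in local coordinates and the pieces patch because left invariance bounds the speed by the integrable function $t\mapsto\|q(t)\|^2$; absolute continuity of $\alpha$ is then automatic from the integral form of the ODE. A secondary, purely bookkeeping point is to handle the zero set $\{t:q(t)=0\}$ carefully so that the two-line case distinction in the definition of $q$ matches the flow on a set of full measure; since the value of $\alpha'$ on a null set is irrelevant for an $AC$ curve, this causes no real difficulty.
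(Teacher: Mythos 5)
Your proposal is correct and takes the same route as the paper: both reduce bijectivity of $Q$ to existence and uniqueness of the absolutely continuous solution of the initial value problem $\alpha(0)=\alpha_0$, $\alpha'(t)=L_{\alpha(t)}\bigl(\|q(t)\|q(t)\bigr)$. In fact you go further than the paper's own argument, which only cites Robinson for the case $G=\mathbb{R}^n$ and explicitly postpones the Lie-group existence/uniqueness proof to a future extended version; your Carath\'eodory argument in charts, combined with the observation that the speed $\|\alpha'(t)\|=\|q(t)\|^2$ is integrable and that a left-invariant metric on $G$ is homogeneous and hence complete (so finite-time escape is impossible), supplies precisely the step the paper leaves out.
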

\begin{proof}
Let $\alpha\in AC([0,1], G)$ denote the preimage under $Q$ of a given $(\alpha_0, q)\in G\times L^2([0,1],\mathfrak{g})$. By definition, $\alpha$ is the unique solution of the initial value problem
$\alpha(0)=\alpha_0$, and $\alpha'(t)=L_{\alpha(t)}(\|q(t)\|q(t)).$ In the case of $G=\mathfrak{g}=\mathbb R^n$, existence and uniqueness of such an $\alpha$ was proved by Robinson in \cite{Rob2012}.
To present a detailed proof of this result in the case of a Lie group $G$ is outside of the scope of this contribution, and we postpone it to a future extended journal version of this article.
\end{proof}

Since we have already given $G$ a Riemannian metric, and $L^2([0,1],\mathfrak{g})$ has its own $L^2$ metric, we obtain a product Riemannian metric on $G\times L^2([0,1],\mathfrak{g})$. Furthermore, as $Q$ is a bijection, there exists  a smooth structure on $AC([0,1],G)$ such that $Q$ is in addition a diffeomorphism. We can then use this diffeomorphism to induce a Riemannian metric (and thus distance function) on $AC([0,1],G)$. Note that it has been shown in \cite{Bru2016} that the mapping $Q$ is not a diffeomorphism and consequently does not induce a Riemannian metric on $AC([0,1],G)$, if the former is equipped with its natural smooth structure.

Given $\alpha_1, \alpha_2\in AC([0,1], G)$, let	$Q(\alpha_1)=(\alpha_1(0),q_1)$ and	$Q(\alpha_2)=(\alpha_2(0),q_2)$.
Then the distance function on $AC([0,1],G)$ takes the form:
\begin{align}
d(\alpha_1, \alpha_2)=\left(d^2(\alpha_1(0), \alpha_2(0))+\|q_1-q_2\|^2\right)^{1/2}
\end{align}
where the $d$ on the right hand side of this equation is the geodesic distance on $G$.
Consider the action of the reparametrization group $\Gamma$ on $AC([0,1], G)$ by right composition and the action of $G$ on $AC([0,1], G)$ by left multiplication. Given $\gamma\in\Gamma$ and $g\in G$, the corresponding actions of $\gamma$ and $g$ on the product space $G\times L^2([0,1],\mathfrak{g})$ are as follows:

\begin{equation}
	g\bullet(\alpha_0, q)\star\gamma=\left(g\alpha_0,\  q\circ\gamma\sqrt{\gamma'}\right),
\end{equation}
where $(\alpha_0, q)\in G\times L^2([0,1], \mathfrak{g})$. $G$ acts by isometries, since the metric on $G$ was chosen to be left-invariant. The proof that $\Gamma$ acts by isometries is the same as in the $\mathbb R^n$ case (see \cite{SrKlJoJe2011}) and we omit it. Hence we have the following proposition.
\begin{proposition}
	The Riemannian metric on $AC([0,1],G)$ and the corresponding distance function are preserved by the action of $G$ and by the action of the reparameterization group $\Gamma$. 
	\end{proposition}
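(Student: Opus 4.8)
The plan is to push everything forward to the product space $G\times L^2([0,1],\mathfrak g)$ along the bijection $Q$. By construction the Riemannian metric on $AC([0,1],G)$ is exactly the pullback under $Q$ of the product metric on $G\times L^2([0,1],\mathfrak g)$, and the associated geodesic distance is the pullback of the product distance; so it is enough to check that the two actions, transported via $Q$, act by isometries of the product metric, whence they automatically preserve the product distance. The transported actions are the ones already displayed, namely $g\bullet(\alpha_0,q)\star\gamma=(g\alpha_0,\ q\circ\gamma\sqrt{\gamma'})$. First I would record why this is so: the $G$-factor transforms by $\alpha(0)\mapsto g\alpha(0)$ and, since $L_{(g\alpha(t))^{-1}}=L_{\alpha(t)^{-1}}\circ L_{g^{-1}}$ and the norm on $G$ is left-invariant, the $q$-component is unchanged by left multiplication; for reparametrizations one uses $\|(\alpha\circ\gamma)'(t)\|=\gamma'(t)\,\|\alpha'(\gamma(t))\|$ and $\gamma(0)=0$ to get precisely $q\circ\gamma\sqrt{\gamma'}$ in the second slot.

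For the action of $G$ the transported map is $L_g\times\mathrm{id}$ on $G\times L^2([0,1],\mathfrak g)$. Left translation $L_g$ is an isometry of $G$ because its metric was chosen left-invariant, the identity is an isometry of $L^2$, and a product of isometries is an isometry of the product metric; this disposes of the $G$-part.

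For the action of $\Gamma$ the transported map is $\mathrm{id}\times R_\gamma$ with $R_\gamma(q)=q\circ\gamma\sqrt{\gamma'}$. Since $L^2([0,1],\mathfrak g)$ is a Hilbert space and $R_\gamma$ is a bounded linear map, its differential at every point equals $R_\gamma$ itself, so it suffices to see that $R_\gamma$ is a linear $L^2$-isometry; this is the change of variables $s=\gamma(t)$, which gives $\int_0^1\|h(\gamma(t))\|^2\gamma'(t)\,dt=\int_0^1\|h(s)\|^2\,ds$ and is valid because $\gamma\in\Gamma$ is a smooth orientation-preserving diffeomorphism. Together with the identity on the $G$-factor this gives the $\Gamma$-part.

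Preservation of the distance function then follows either abstractly — an isometric diffeomorphism maps paths to paths of equal length and hence preserves the infimum defining the geodesic distance — or by direct inspection of $d(\alpha_1,\alpha_2)^2=d_G(\alpha_1(0),\alpha_2(0))^2+\|q_1-q_2\|^2$, using left-invariance of $d_G$ on the first term and applying the change of variables above to $q_1-q_2$ in the second. The argument is entirely routine; the only places asking for a little care — the nearest thing to an obstacle — are the measure-theoretic ones, namely the change of variables for merely $L^2$ integrands and the verification that $Q$ genuinely intertwines each group action with its stated counterpart on the product, both of which are short.
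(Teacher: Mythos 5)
Your proposal is correct and follows essentially the same route as the paper: transport both actions through $Q$ to $G\times L^2([0,1],\mathfrak g)$, use left-invariance of the metric on $G$ for the $G$-action, and verify that $q\mapsto (q\circ\gamma)\sqrt{\gamma'}$ is a linear $L^2$-isometry by change of variables (a step the paper omits, citing the $\mathbb R^n$ case in Srivastava et al.). Your filled-in details, including the verification that $Q$ intertwines the actions and the direct check on the distance formula, are accurate.
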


We will now derive the formula for the induced Riemannian metric on $AC([0,1], G)$.
Denote by $\langle\cdot,\cdot\rangle^G$ the metric on $G$. Given $\alpha\in AC([0,1], G)$ and $u\in T_{\alpha}AC([0,1], G)$, one can compute the differential of $Q$:
\begin{align}
Q_{*\alpha}: T_{\alpha}AC([0,1], G)&\to T_{(\alpha(0),q)}(G\times L^2([0,1],\mathfrak{g}))\notag\\
Q_{*\alpha}u&=\left(u(0),q_{*\alpha}u\right),
\end{align}
where $q_{*\alpha}: T_{\alpha}AC([0,1], G)\to T_qL^2([0,1],\mathfrak{g})$ and
\begin{align}
q_{*\alpha}u=&\|\alpha'\|^{1/2}D_s(u)-\frac{1}{2} \| \alpha'\|^{-3/2} \langle D_s u, \delta^l(\alpha)\rangle^G \delta^l(\alpha).
\end{align} 
Here $\delta^l(\alpha)=\alpha^{-1}\alpha'$ and $D_s(v)=\frac1{\|\alpha'\|}\delta_{*\alpha}^l(v)$.  For a proof of this computation we refer to \cite{CeEsSch2015}. 
The metric on the space $AC([0,1], G)$ is then obtained as the pullback of the natural product metric of $G\times L^2([0,1],\mathfrak{g})$ under $Q$:
\begin{proposition}
	Let $u, v$ be smooth tangent vectors with foot point an immersion $\alpha$. The pullback metric $\mathcal{G}$ on $AC([0,1], G)$ at the smooth immersion $\alpha$ is given by
	\begin{align}
	\mathcal{G}_{\alpha}(u, v)&=\langle Q_{*\alpha}u, Q_{*\alpha}v\rangle_{Q(\alpha)}
	=\langle u(0),v(0)\rangle^G\\
	&+\int \langle D_s u^N, D_s v^N\rangle^G+\frac14\langle D_s u^T, D_sv^T\rangle^Gds,\notag
	\end{align}
	where we integrate with respect to arclength $ds=\|\alpha'(t)\|dt$,  $D_su^T=\langle D_su,
	\frac{\delta^l(\alpha)}{\|\alpha'\|}\rangle^G\left(\frac{\delta^l(\alpha)}{\|\alpha'\|}\right)$ and $D_su^N=D_su-D_su^T$
	are the tangential component and the normal component of $D_su$ respectively.
\end{proposition}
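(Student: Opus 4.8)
The plan is a direct pullback computation, reducing everything to the pointwise formula for $q_{*\alpha}$ recorded above. By definition of $\mathcal{G}$ as the pullback under $Q$ of the product metric on $G\times L^2([0,1],\mathfrak g)$, and since $Q_{*\alpha}u=(u(0),q_{*\alpha}u)$, we have
\begin{equation}
\mathcal{G}_\alpha(u,v)=\langle u(0),v(0)\rangle^G+\int_0^1\langle q_{*\alpha}u,\,q_{*\alpha}v\rangle^G\,dt,
\end{equation}
so it remains only to identify the integrand with $\big(\langle D_su^N,D_sv^N\rangle^G+\tfrac14\langle D_su^T,D_sv^T\rangle^G\big)\|\alpha'\|$, after which the change of variables $ds=\|\alpha'(t)\|\,dt$ finishes the argument.

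First I would record the elementary consequence of left-invariance that $\|\delta^l(\alpha)\|=\|\alpha'\|$ pointwise, so that $\nu:=\delta^l(\alpha)/\|\alpha'\|$ is a \emph{unit} vector in $\mathfrak g$ at each $t$; here the immersion hypothesis guarantees $\|\alpha'\|>0$, so $\nu$ is well defined. Substituting $\delta^l(\alpha)=\|\alpha'\|\,\nu$ into the formula $q_{*\alpha}u=\|\alpha'\|^{1/2}D_s u-\tfrac12\|\alpha'\|^{-3/2}\langle D_su,\delta^l(\alpha)\rangle^G\delta^l(\alpha)$ collapses the powers of $\|\alpha'\|$ and gives the clean expression
\begin{equation}
q_{*\alpha}u=\|\alpha'\|^{1/2}\Big(D_su-\tfrac12\langle D_su,\nu\rangle^G\,\nu\Big)=\|\alpha'\|^{1/2}\Big(D_su^N+\tfrac12 D_su^T\Big),
\end{equation}
and likewise for $v$; this uses nothing beyond the definitions $D_su^T=\langle D_su,\nu\rangle^G\nu$ and $D_su^N=D_su-D_su^T$.

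Then I would expand the pointwise inner product. Since the tangential parts are scalar multiples of $\nu$ while the normal parts are orthogonal to $\nu$, the cross terms $\langle D_su^N,D_sv^T\rangle^G$ and $\langle D_su^T,D_sv^N\rangle^G$ vanish, leaving
\begin{equation}
\langle q_{*\alpha}u,\,q_{*\alpha}v\rangle^G=\|\alpha'\|\Big(\langle D_su^N,D_sv^N\rangle^G+\tfrac14\langle D_su^T,D_sv^T\rangle^G\Big).
\end{equation}
Inserting this into the integral and writing $ds=\|\alpha'(t)\|\,dt$ yields exactly the claimed formula. The computation is routine, and there is no real obstacle; the only genuine inputs are (i) left-invariance of the metric on $G$, used both to make $\nu$ a unit vector and to ensure that $D_su$ and its normal/tangential decomposition are computed against a fixed inner product on $\mathfrak g$, and (ii) the immersion assumption, so that arc length and $\nu$ are defined everywhere. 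Equivalently, one may observe that, after left-translating all tangent data to $\mathfrak g$, the calculation is formally identical to the Euclidean SRV computation of \cite{SrKlJoJe2011}, with $\mathbb R^d$ replaced by the Lie algebra $\mathfrak g$ equipped with $\langle\cdot,\cdot\rangle^G$.
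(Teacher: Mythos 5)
Your computation is correct and is exactly the argument the paper intends: it takes the stated formula for $q_{*\alpha}$ (which the paper quotes from Celledoni et al.), uses left-invariance to see that $\delta^l(\alpha)/\|\alpha'\|$ is a unit vector, rewrites $q_{*\alpha}u=\|\alpha'\|^{1/2}\bigl(D_su^N+\tfrac12 D_su^T\bigr)$, and expands the $L^2$ product metric with the cross terms vanishing by orthogonality, followed by the change of variables $ds=\|\alpha'\|\,dt$. No gaps; this matches the paper's (omitted, routine) verification.
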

For $G=\mathbb R^d$ the formula for the metric $\mathcal{G}$ reduces to \eqref{elastic_metric}, i.e., we obtain the elastic metric as defined in \cite{MiSrJo2007}.
On Lie groups the last two terms form the pullback metric obtained by Celledoni et al. in \cite{CeEsSch2015} (using right  instead of left trivialization). However, it is different than the metric introduced 
 by Le Brigant et al. \cite{LeArBa2015} and Zhang et al. \cite{ZhSuKlLeSr2015} for arbitrary Riemannian manifolds. In our method the velocities are transported to the Lie algebra using left translation, while the metric in the above mentioned work is based on parallel transport. Thus these metrics will be different if $G$ is not an abelian Lie group. In Fig. \ref{H2examples} we show examples of geodesics for curves in hyperbolic space. The resulting geodesics are very similar to the geodesics obtained in \cite{LeArBa2015,Lebrigant2016}. We plan to further investigate the similarities between these methods in future work.

\begin{figure*}
	\begin{center}
		\includegraphics[width=0.225\linewidth]{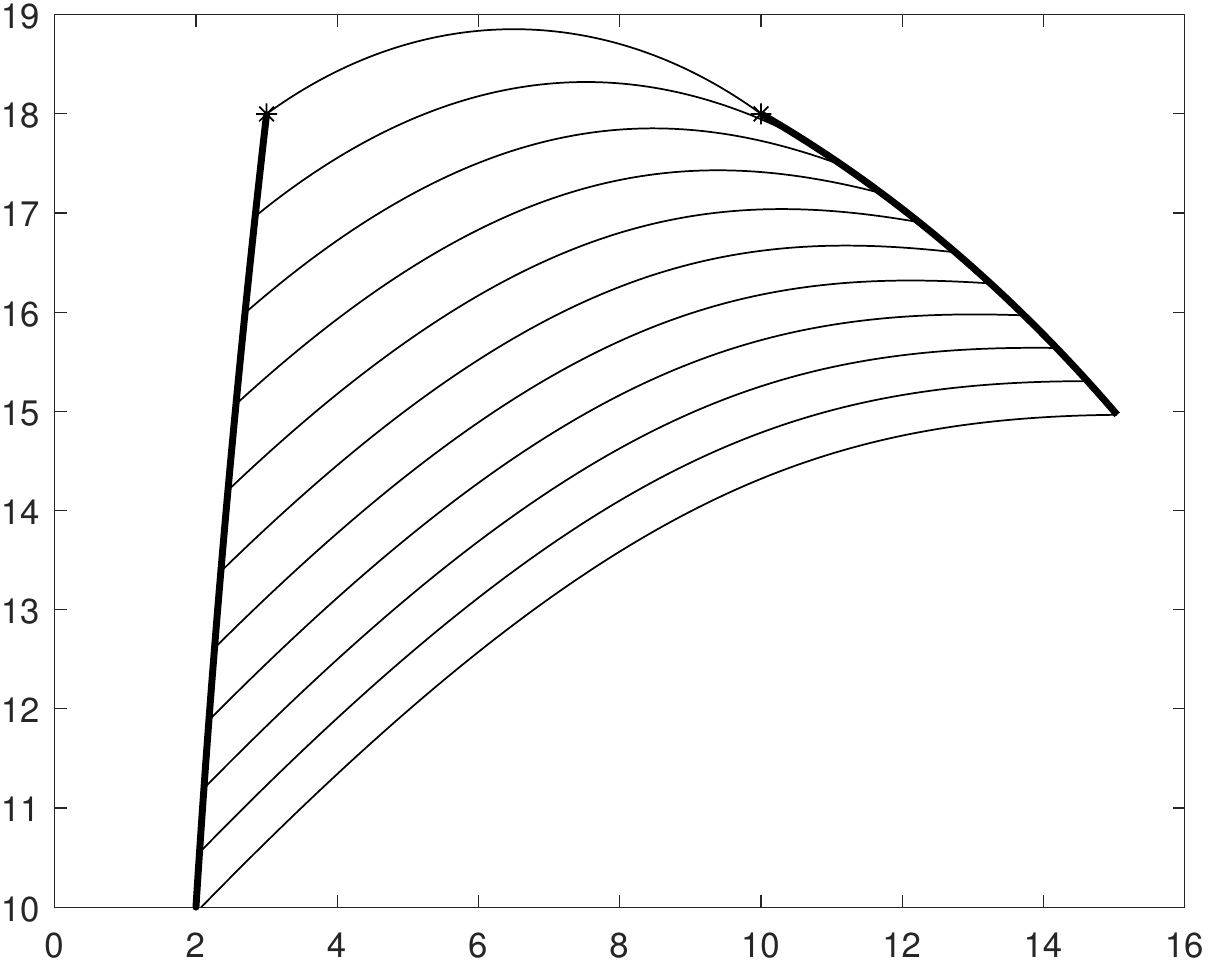}
		\includegraphics[width=0.225\linewidth]{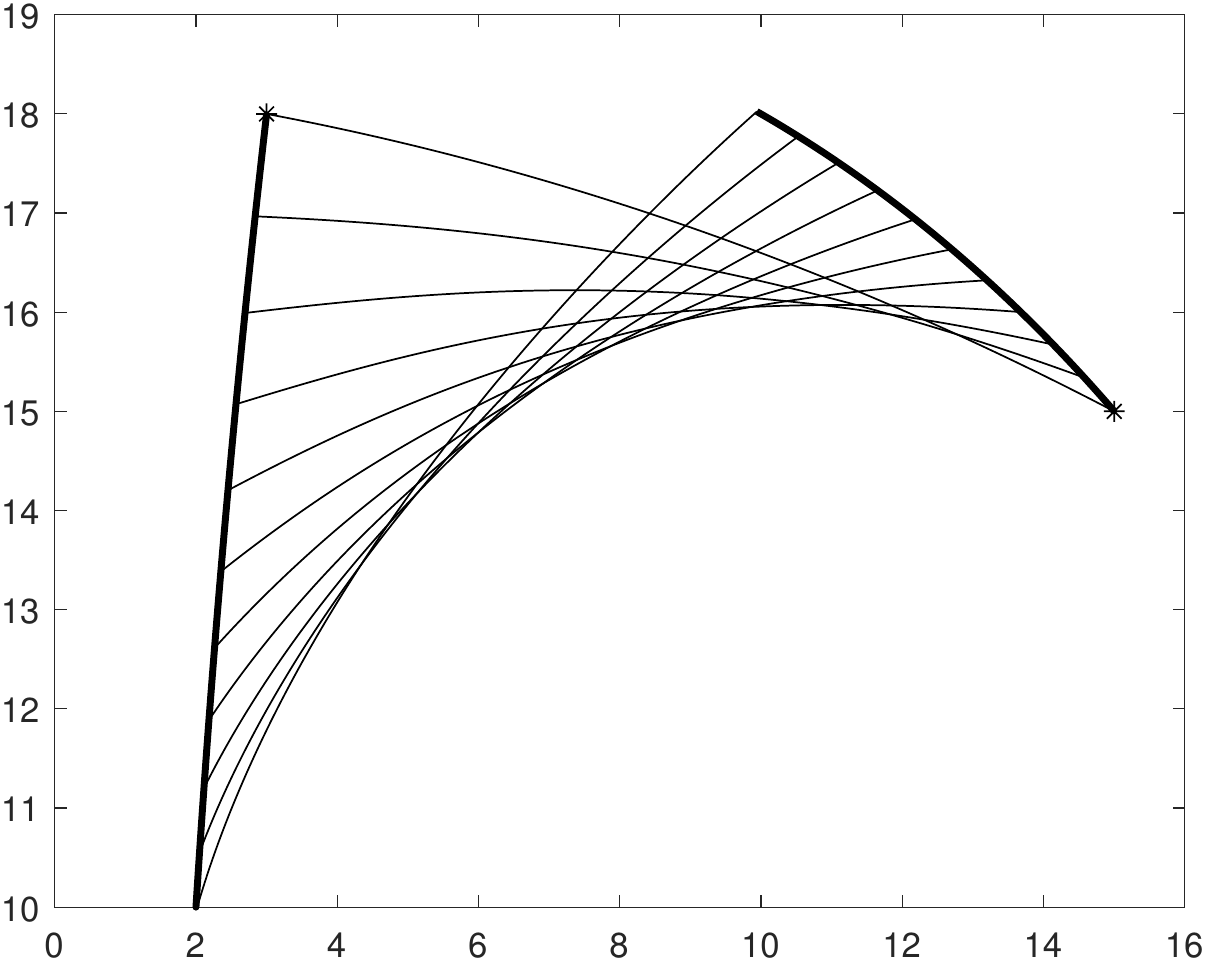}
		\includegraphics[width=0.225\linewidth]{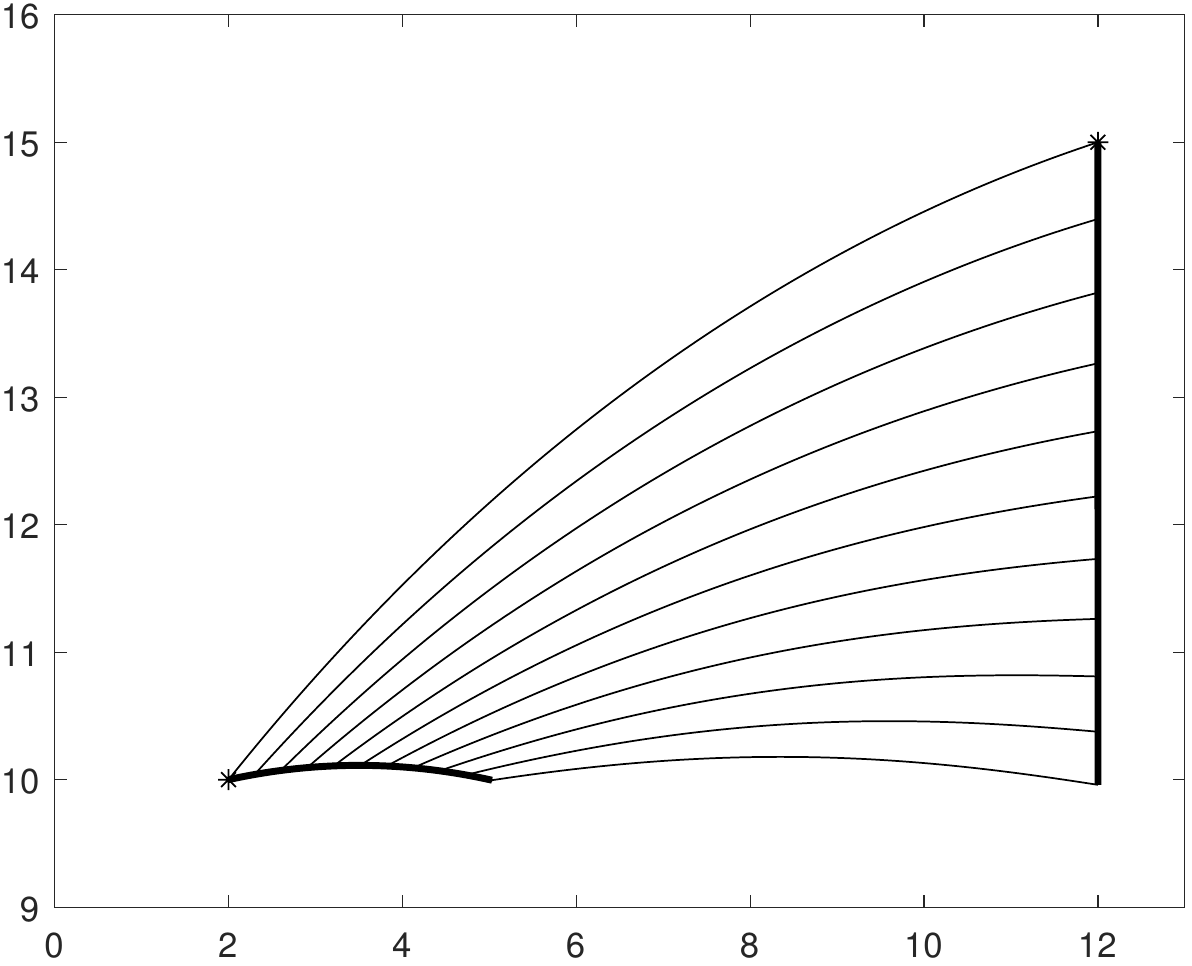}
		\includegraphics[width=0.225\linewidth]{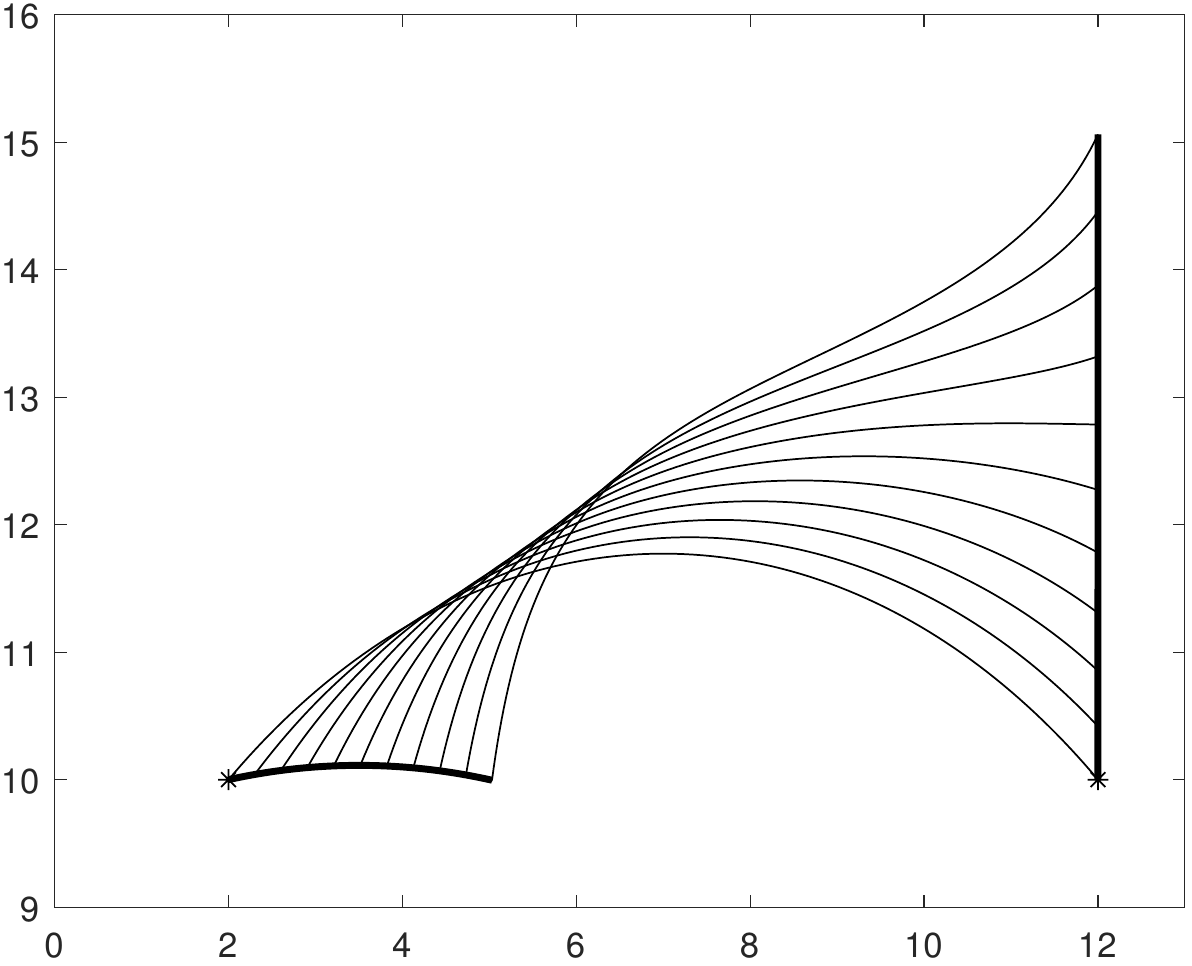}
	\end{center}
	\caption{Examples of geodesics between parametrized curves in 2-dimensional hyperbolic space. We show selected particle paths of the geodesic  connecting the boundary curves.}
	\label{H2examples}
\end{figure*}

\subsection{Curves with values in a homogeneous space $M$}

In this section, we will analyze curves in $M=G/K$ by relating them to their horizontal lifts in $G$. Note that $\mathfrak{g}=\mathfrak{k}\oplus \mathfrak{k}^{\perp}$, where  $\mathfrak{k}$ denotes the Lie algebra of $K$ and $\mathfrak{k}^{\perp}$ denotes the orthogonal complement of $\mathfrak{k}$ in $\mathfrak{g}$. Denote by $AC^{\perp}([0,1], G)$ the set of all absolutely continuous paths in $G$ which are orthogonal to each coset of $K$ that they meet. Since the metric on $G$ is left invariant, $\alpha\in AC^{\perp}([0,1], G)$ is equivalent to $L_{\alpha^{-1}}\alpha'(t)\perp\mathfrak{k}$, which is equivalent to $q\in L^2([0,1], \mathfrak{k}^{\perp})$, where $(\alpha(0), q) = Q(\alpha)$. Therefore, $Q$ restricts to a bijection between $AC^{\perp}([0,1], G)$ and $G\times L^2([0,1],\mathfrak{k}^{\perp})$.

Now consider the right action of $K$ on $G\times L^2([0,1], \mathfrak{k}^{\perp})$ given by:
\begin{equation}
	(\alpha_0, q)\ast y= (\alpha_0y, y^{-1}qy),
\end{equation}
where $y\in K, \alpha_0\in G$ and $q\in L^2([0,1],\mathfrak{k}^{\perp})$. 
Note that $K$ acts by isometries, where we put the standard $L^2$ metric on $L^2([0,1], \mathfrak{k}^{\perp})$ and the product metric on $G\times L^2([0,1], \mathfrak{k}^{\perp})$.
Denote by $\pi: G\to M$ the quotient map, $V_p=\ker \pi_{*p}$ the vertical distribution for $p\in G$ and $H_p$ the orthogonal complement of $V_p$ in $T_pG$. For every $p\in G$, $T_pG=H_p\oplus V_p$ and $\pi_{*p}$ induces an isomorphism between $H_p$ and $T_{\pi(p)}M$. Thus, given $\beta\in AC([0,1], M)$ and $\alpha_0\in \pi^{-1}(\beta(0))$, there is a unique lift $\alpha\in  AC^{\perp}([0,1], G)$ such that $\alpha(0)=\alpha_0$ and $\beta(t)=\pi(\alpha(t))$. Note that the horizontal lift of $\beta$ to $\alpha\in AC^{\perp}([0,1], G)$ depends only on the choice of the lift $\alpha_0$ of the initial point $\beta(0)$:
\begin{align}
\xymatrix{	& G \ar[d]^{\pi} \\
	I \ar[ur]^{\alpha} \ar[r]_{\beta} & M= G/K}
\end{align}

Let $\alpha_0, \tilde{\alpha}_0$ be two lifts of $\beta(0)$ and $\alpha, \tilde{\alpha}$ be the lifts of $\beta$ in $AC^{\perp}([0,1], G)$ starting at $\alpha_0$ and $\tilde{\alpha}_0$ respectively. Then $\tilde{\alpha}=\alpha y,$
where $y=\alpha_0^{-1}\tilde{\alpha}_0\in K$; also
\begin{equation}
 (\tilde\alpha_0, \tilde q)=(\alpha_0, q)\ast y,
\end{equation}
where $(\alpha_0, q)= Q(\alpha)$ and $(\tilde\alpha_0, \tilde q)=Q(\tilde\alpha)$.
It follows that $Q$ induces a bijection
\begin{equation}
	(G\times L^2([0,1], \mathfrak{k}^{\perp}))/K \to AC([0,1], M).
\end{equation}

Since $K$ is compact and acts freely on $G\times L^2([0,1],\mathfrak{k}^{\perp})$, there is an inherited Riemannian metric on the space $(G\times L^2([0,1], \mathfrak{k}^{\perp}))/K$. A minimal geodesic in the quotient corresponds to a shortest geodesic between two orbits in $G\times L^2(I,\mathfrak{k}^{\perp})$ under the action of $K$. We use this bijection to transfer the smooth structure and the Riemannian metric on $(G\times L^2(I,\mathfrak{k}^{\perp}))/K$ to $AC([0,1], M)$, making the latter into a Riemannian manifold.

Suppose $\beta_1, \beta_2$ are two paths in $AC([0,1], M)$; let $\alpha_1$ and $\alpha_2$ be lifts of $\beta_1$, $\beta_2$ in $AC^{\perp}([0,1], G)$. Let
\begin{equation}
	Q(\alpha_1)=(\alpha_1(0),q_1),\quad Q(\alpha_2)=(\alpha_2(0),q_2).
\end{equation}
The distance between $\beta_1$ and $\beta_2$ induced from the distance function on $AC([0,1], G)$ is given by:
\begin{align}
  &d(\beta_1, \beta_2)\notag\\
    =&\inf_{y\in K}\left(d^2(\alpha_1(0), \alpha_2(0)y)+\|q_1-y^{-1}q_2y\|^2\right)^{1/2}.
\end{align}
Consider now the right action of $\Gamma$ and the left action of $G$ on $G\times L^2([0,1],\mathfrak{k}^{\perp})$. Similar as in the case of $AC([0,1], G)$, we have the following proposition:
\begin{proposition}\label{Minvariance}
The Riemannian metric on $AC([0,1],M)$ and the corresponding distance function are preserved by the action of $G$ and by the action of the reparameterization group $\Gamma$.
\end{proposition}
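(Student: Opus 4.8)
The plan is to deduce this from the analogous proposition for $AC([0,1],G)$ proved above, by pushing the two group actions through the quotient that defines $AC([0,1],M)$. Recall that $AC([0,1],M)$ was identified with $(G\times L^2([0,1],\mathfrak{k}^{\perp}))/K$, where $K$ acts on the right by $(\alpha_0,q)\ast y=(\alpha_0 y,y^{-1}qy)$, and that this $K$-action is by isometries. So it suffices to show: (i) the left $G$-action and the right $\Gamma$-action on $G\times L^2([0,1],\mathfrak{g})$ (equivalently, on $AC([0,1],G)$) preserve the subset $G\times L^2([0,1],\mathfrak{k}^{\perp})$ (equivalently, $AC^{\perp}([0,1],G)$); (ii) these two actions commute with the $K$-action; and then (iii) invoke that $K$-equivariant isometries of the total space descend to isometries of the Riemannian quotient.

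For (i): from the formula $g\bullet(\alpha_0,q)\star\gamma=(g\alpha_0,q\circ\gamma\sqrt{\gamma'})$ one reads off that the left $G$-action leaves the $q$-component unchanged, and that $\Gamma$ acts on $q$ only by reparametrization and pointwise rescaling; in both cases the pointwise values of $q$ stay in $\mathfrak{k}^{\perp}$, so $L^2([0,1],\mathfrak{k}^{\perp})$ is preserved. Equivalently, on the side of curves, $L_g$ maps horizontal paths to horizontal paths because the metric on $G$ is left invariant and $\pi$ is $G$-equivariant (so $L_{g*}$ preserves the vertical distribution $V$ and hence $H=V^{\perp}$), and $\alpha\mapsto\alpha\circ\gamma$ does so because $(\alpha\circ\gamma)'=\gamma'\cdot(\alpha'\circ\gamma)$ is a positive multiple of a horizontal vector. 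For (ii): $g\bullet((\alpha_0,q)\ast y)=(g\alpha_0 y,y^{-1}qy)=(g\bullet(\alpha_0,q))\ast y$ by associativity of multiplication in $G$, and $((\alpha_0,q)\ast y)\star\gamma=(\alpha_0 y,(y^{-1}qy)\circ\gamma\sqrt{\gamma'})=((\alpha_0,q)\star\gamma)\ast y$ because the constant $y$ can be pulled in and out of the reparametrization and rescaling. Hence both actions descend to well-defined actions on $(G\times L^2([0,1],\mathfrak{k}^{\perp}))/K\cong AC([0,1],M)$, and one checks these are precisely the $G$-action induced from $M=G/K$ and the reparametrization action $\beta\mapsto\beta\circ\gamma$.

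For (iii): by the proposition for $AC([0,1],G)$, $G$ and $\Gamma$ act by isometries on $AC([0,1],G)$; restricting to the invariant submanifold $AC^{\perp}([0,1],G)$ with its induced metric, they still act by isometries. Being $K$-equivariant, such an isometry $F$ maps $K$-orbits isometrically onto $K$-orbits, so its differential sends the vertical distribution to the vertical distribution and, being an isometry, sends the horizontal distribution to the horizontal distribution; therefore the induced map on the quotient preserves the Riemannian submersion metric, i.e.\ the metric on $AC([0,1],M)$. For the distance function, one can either note that it is the distance induced by this metric, or argue directly from $d(\beta_1,\beta_2)=\inf_{y\in K}\big(d^2(\alpha_1(0),\alpha_2(0)y)+\|q_1-y^{-1}q_2y\|^2\big)^{1/2}$: under the $G$-action the first summand becomes $d^2(g\alpha_1(0),g\alpha_2(0)y)=d^2(\alpha_1(0),\alpha_2(0)y)$ by left invariance of the geodesic distance on $G$ while the $q_i$ are unchanged, and under $\gamma$ the second summand becomes $\|(q_1-y^{-1}q_2y)\circ\gamma\sqrt{\gamma'}\|^2$, whose $L^2$-norm is unchanged by the substitution $s=\gamma(t)$; in either case the infimum over $K$ is unchanged.

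The one step that is not a routine verification is (iii): it relies on the quotient metric on $AC([0,1],M)$ being exactly the one making $AC^{\perp}([0,1],G)\to AC([0,1],M)$ a Riemannian submersion, so that a $K$-equivariant isometry of the total space passes to an isometry of the base. Here we lean on the facts already asserted in the construction — that $K$ is compact, acts freely, and acts by isometries — which is what makes the quotient a Riemannian manifold in the first place; granting that, parts (i) and (ii) supply exactly the equivariance and horizontality needed to transport the isometries to the quotient.
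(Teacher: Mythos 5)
Your proposal is correct and follows essentially the route the paper intends: the paper proves invariance on $AC([0,1],G)$ (equivalently, on $G\times L^2([0,1],\mathfrak{g})$) and then simply asserts the result for $AC([0,1],M)$ "as in the case of $AC([0,1],G)$," i.e.\ by viewing the $G$- and $\Gamma$-actions on $G\times L^2([0,1],\mathfrak{k}^{\perp})$ and passing through the $K$-quotient. You have merely supplied the details the paper leaves implicit — preservation of $L^2([0,1],\mathfrak{k}^{\perp})$, commutation with the $K$-action, and descent of $K$-equivariant isometries to the Riemannian quotient — all of which are verified correctly.
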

The formula for the induced pull back metric on the space $AC([0,1], M)$ is then simply given by restricting the metric $\mathcal{G}$ to horizontal vector fields. 


\section{Computing Geodesics}

\subsection{Comparing Curves in $M$}

To compute the geodesic between $\beta_1$ and $\beta_2$ in $AC([0,1],M)$, we need to compute the geodesic of minimal length between the orbits of $Q(\alpha_1)$ and $Q(\alpha_2)$ under the action of $K$. To do this, we need to find $y\in K$ that minimizes
\begin{align}
d^2(\alpha_1(0), \alpha_2(0)y)+\|q_1-y^{-1}q_2y\|^2.
\end{align} 
Then the geodesic between
$(\alpha_1(0),q_1)$ and $(\alpha_2(0)y,y^{-1}q_2y)$ will project to a geodesic between $\beta_1$ and $\beta_2$, see \cite{Oneill1966,Mic2016} for more details regarding Riemannian submersions. 

In practice we will search for the optimal $y$ using a gradient descent method. 
Towards this aim we define the functional $F: K\to \mathbb{R}$ by
\begin{equation}
	F(y)=d^2(\alpha_1(0), \alpha_2(0)y)+\|q_1-y^{-1}q_2y\|^2,
\end{equation}
which is the square of the distance function between the $Q$-map $(\alpha_1(0), q_1)$ and $(\alpha_2(0)y, y^{-1}q_2y)$. Since $K$ acts transitively on $(\alpha_2(0), q_2)\ast K$, we can just calculate the gradient at $y=I$.

To simplify the presentation, we   assume that $G$ is a matrix group and that the inner product on the 
Lie algebra $\mathfrak{g}$ is given by $\langle x, y\rangle = \operatorname{tr}(x^ty)$, where $x^t$ means the transpose of $x$.
We will calculate the gradient of the two terms of $F$ separately.  
The first term of $F$ can be  extended to a function $F_1: G\to \mathbb{R}$, defined by the same formula: $F_1(y)= d^2\left(\alpha_1(0), \alpha_2(0)y\right)$. By left invariance of the metric on $G$, we can rewrite this as $F_1(y)=d^2\left(\alpha_2(0)^{-1}\alpha_1(0), y\right)$. It is a well-known fact that the gradient of this function at $y=I$ is given by $\nabla_I F_1=-2\textrm{Log}_I(\alpha_2(0)^{-1}\alpha_1(0))\in\mathfrak{g}$, where Log denotes the inverse Riemannian exponential function at $I\in G$. If Log is multivalued, we will take the value with the smallest norm. Now, if we restrict $F_1$ to $K$, then the gradient in $\mathfrak{k}$ will simply be the projection of the above expression from $\mathfrak{g}$ to $\mathfrak{k}$. Thus the gradient of the first term of $F(y)$ is given by $-2\textrm{Proj}_{\mathfrak{k}}\left(\textrm{Log}_I(\alpha_2(0)^{-1}\alpha_1(0))\right).$

Now we turn our attention to the second term of $F$. By the bi-invariance of the metric under multiplication by elements of $K$, we have
\begin{align}
F_2(y)&=\|q_1-y^{-1}q_2y\|^2\notag\\
&=\|q_1\|^2+\|y^{-1}q_2y\|^2-2\langle q_1, y^{-1}q_2y\rangle\notag\\
&=\|q_1\|^2+\|q_2\|^2-2\langle q_1, y^{-1}q_2y\rangle.
\end{align}
Since the first two terms do not depend on $y$, we just need to calculate the gradient of $\langle q_1, y^{-1}q_2y\rangle$. We will use the first order approximation $y\sim I+tV$ and $y^{-1}\sim I-tV$, where $V\in \mathfrak{k}$. Then we have the directional derivative of $\langle q_1, y^{-1}q_2y\rangle$ at $I$ in the direction $V$:
\begin{align}
\dfrac{d}{dt}_{t=0}\langle q_1, &(I-tV)q_2(I+tV)\rangle=\langle q_1, q_2V\rangle-\langle q_1, Vq_2\rangle\notag\\
&=\int_0^1\operatorname{tr}(q_1^tq_2V)dt-\int_0^1\operatorname{tr}(q_1^tVq2)dt\notag\\
&=\int_0^1\operatorname{tr}(q_1^tq_2V)dt-\int_0^1\operatorname{tr}(q_2q_1^tV)dt\notag\\
&=\operatorname{tr}(\int_0^1(q_1^tq_2-q_2q_1^t)dt V)\notag\\
&=\langle\int_0^1(q_2^tq_1-q_1q_2^t)dt,V\rangle
\end{align}
So we get the gradient of the second term at $y=I$
\begin{equation}
	2\textrm{Proj}_{\mathfrak{k}}\left(\int_0^1(q_1q_2^t-q_2^tq_1)dt\right).
\end{equation}
Hence the gradient of $F$ at $y=I$ is
\begin{align}
\nabla_I F=&2\textrm{Proj}_{\mathfrak{k}}(-\textrm{Log}_I(\alpha_2(0)^{-1}\alpha_1(0))\notag\\
&\qquad\qquad\qquad+\int_0^1(q_1q_2^t-q_2^tq_1)dt).
\end{align}

This yields the following algorithm to obtain the geodesic of minimal length between the orbits of the $Q$-map $(\alpha_1(0), q_1)$ and $(\alpha_2(0), q_2)$:
\begin{enumerate}[(1)]
	\item For $(\alpha_1(0), q_1), (\alpha_2(0), q_2)\in G\times L^2([0,1], \mathfrak{k}^{\perp})$, set the step size $\epsilon$ and calculate the gradient at $y=I$.
	\item Update $(\alpha_2(0), q_2)$ to $(\alpha_2(0)y, y^{-1}q_2y)$, where $$y=\textrm{Exp}_I(-\epsilon\nabla F).$$
	\item If the norm of $\nabla F$ is small enough, then stop. Otherwise go back to step (1).
\end{enumerate}

Using this algorithm, we obtain a geodesic which locally minimizes the distance between the orbits of $(\alpha_1(0), q_1)$ and $(\alpha_2(0), q_2)$. If we start with a different orbit representative $\tilde{\alpha}_2(0)\in G$, the algorithm may converge to a different geodesic. To increase our chance of finding the minimal geodesic, we act on $(\alpha_2(0), q_2)$ by several different elements in $K$, and use the resulting points as initializations for the gradient algorithm. 


\begin{figure*}
	\begin{center}
		\includegraphics[width=0.225\linewidth]{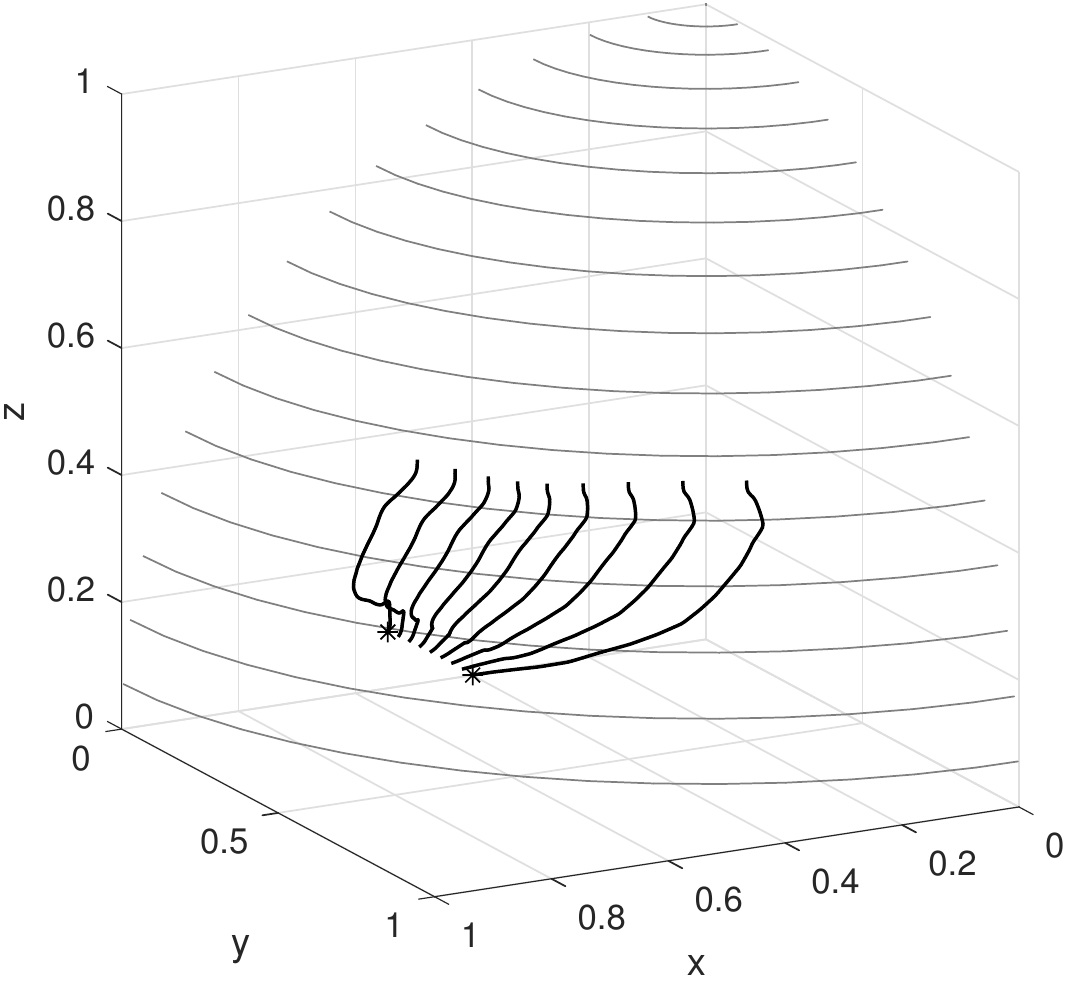}
		\includegraphics[width=0.225\linewidth]{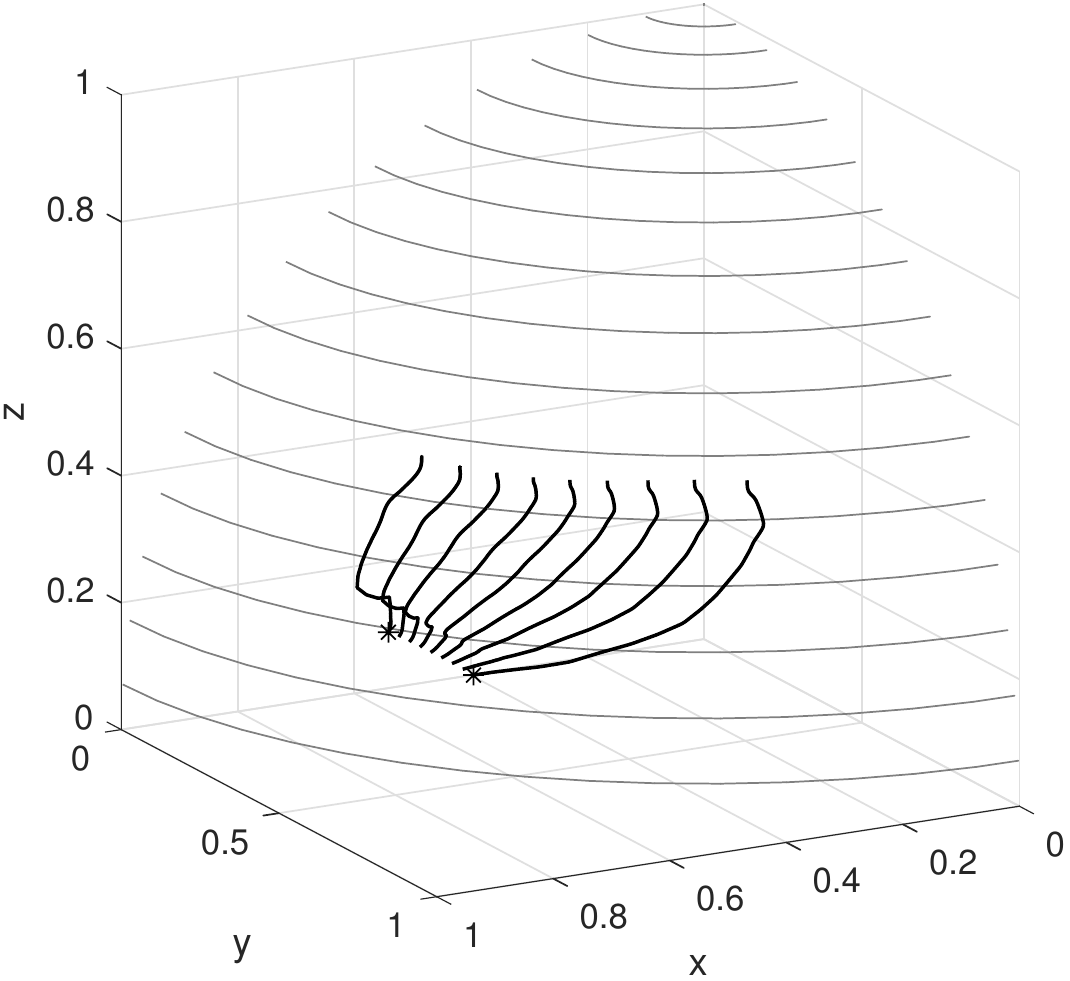}
		\includegraphics[width=0.225\linewidth]{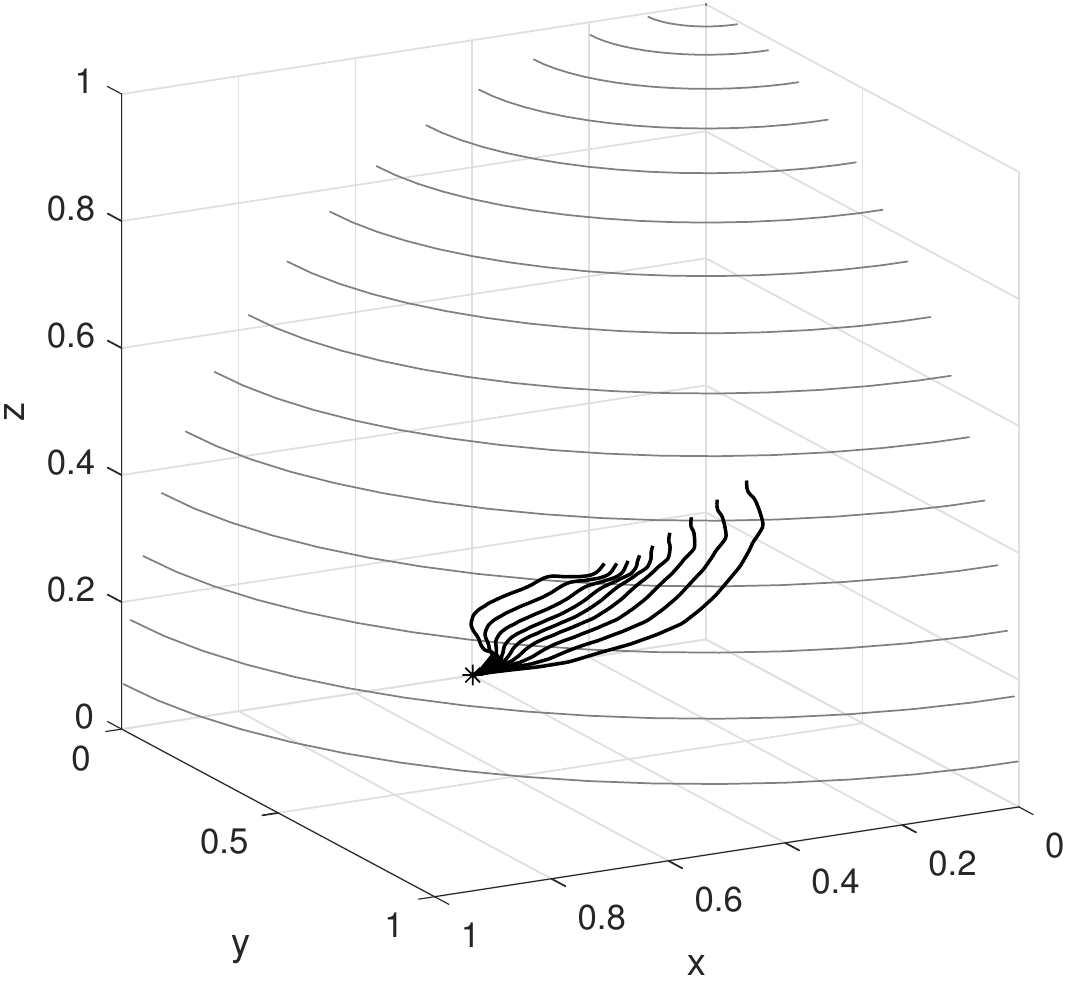}
		\includegraphics[width=0.225\linewidth]{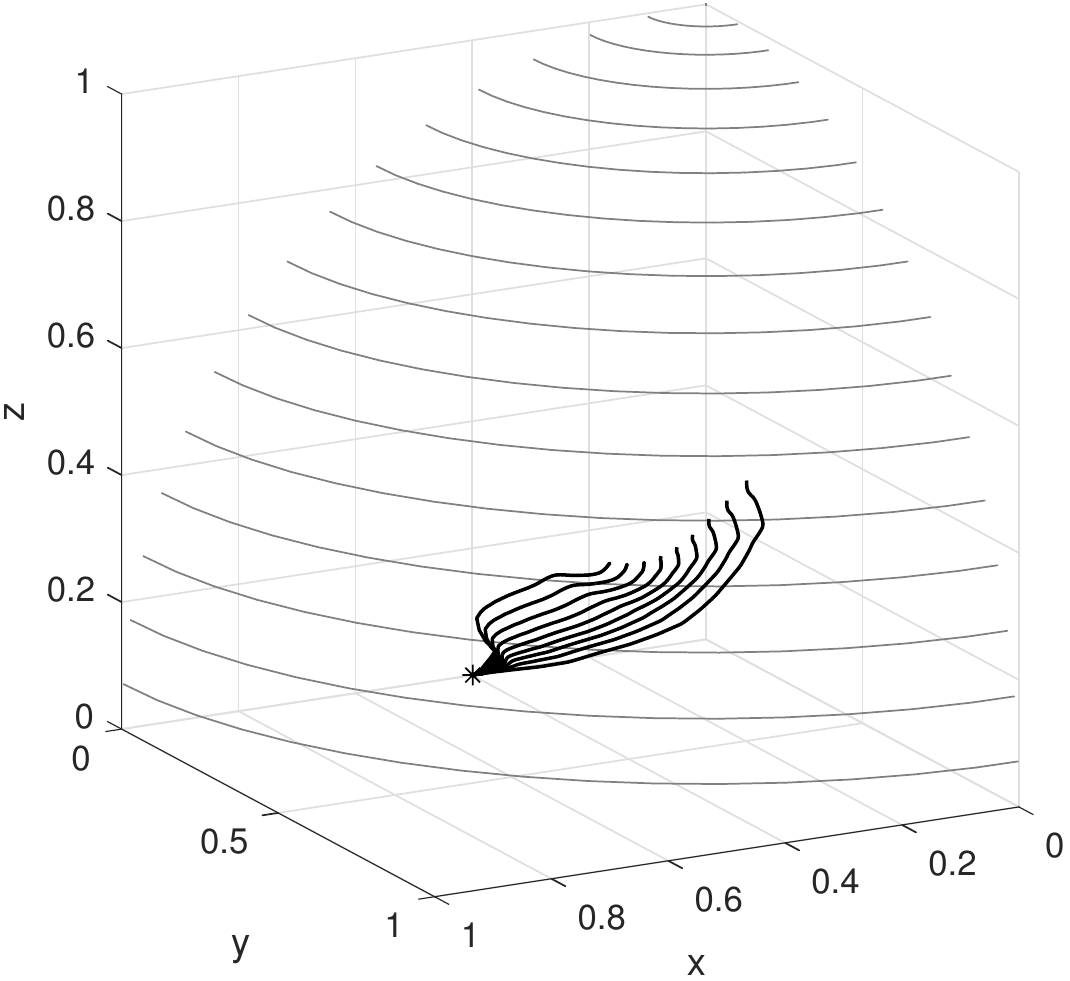}\\
		
		\includegraphics[width=0.225\linewidth]{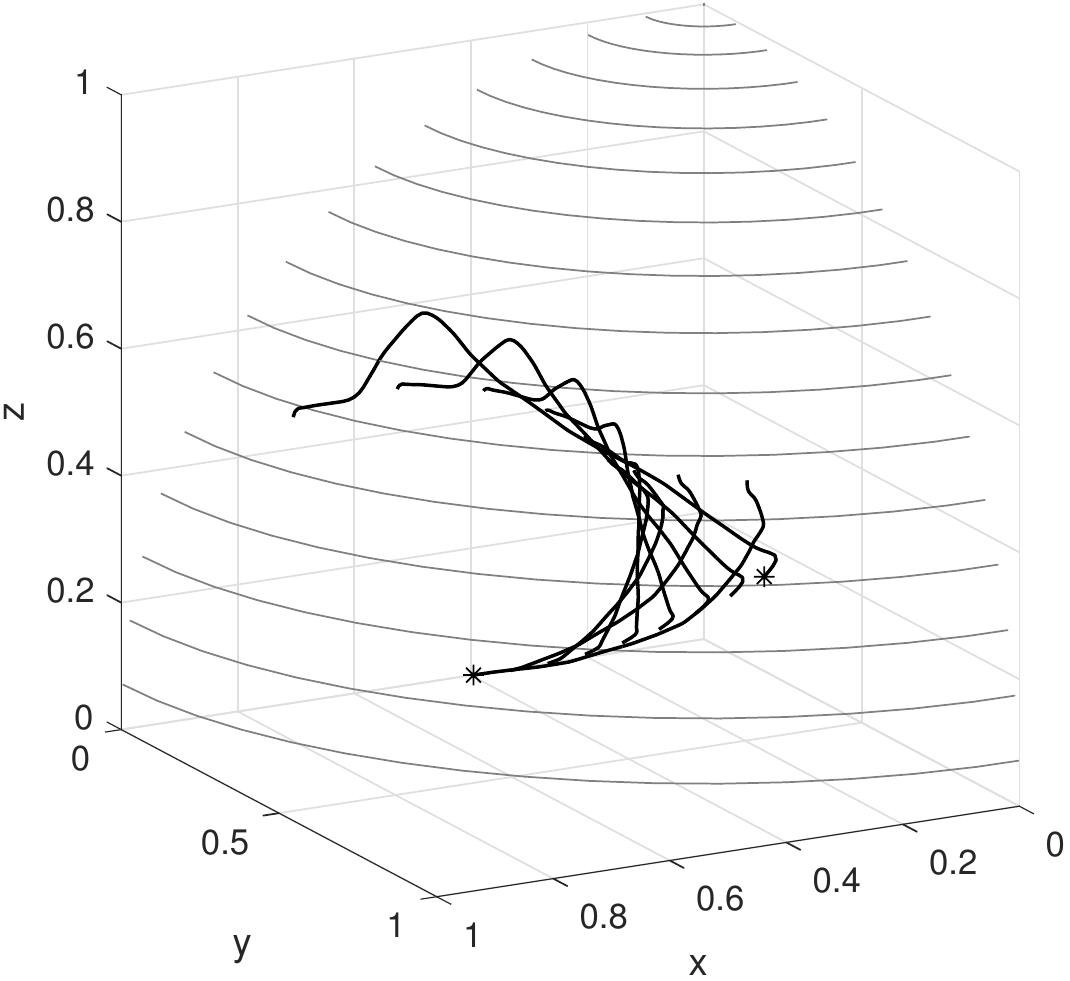}
		\includegraphics[width=0.225\linewidth]{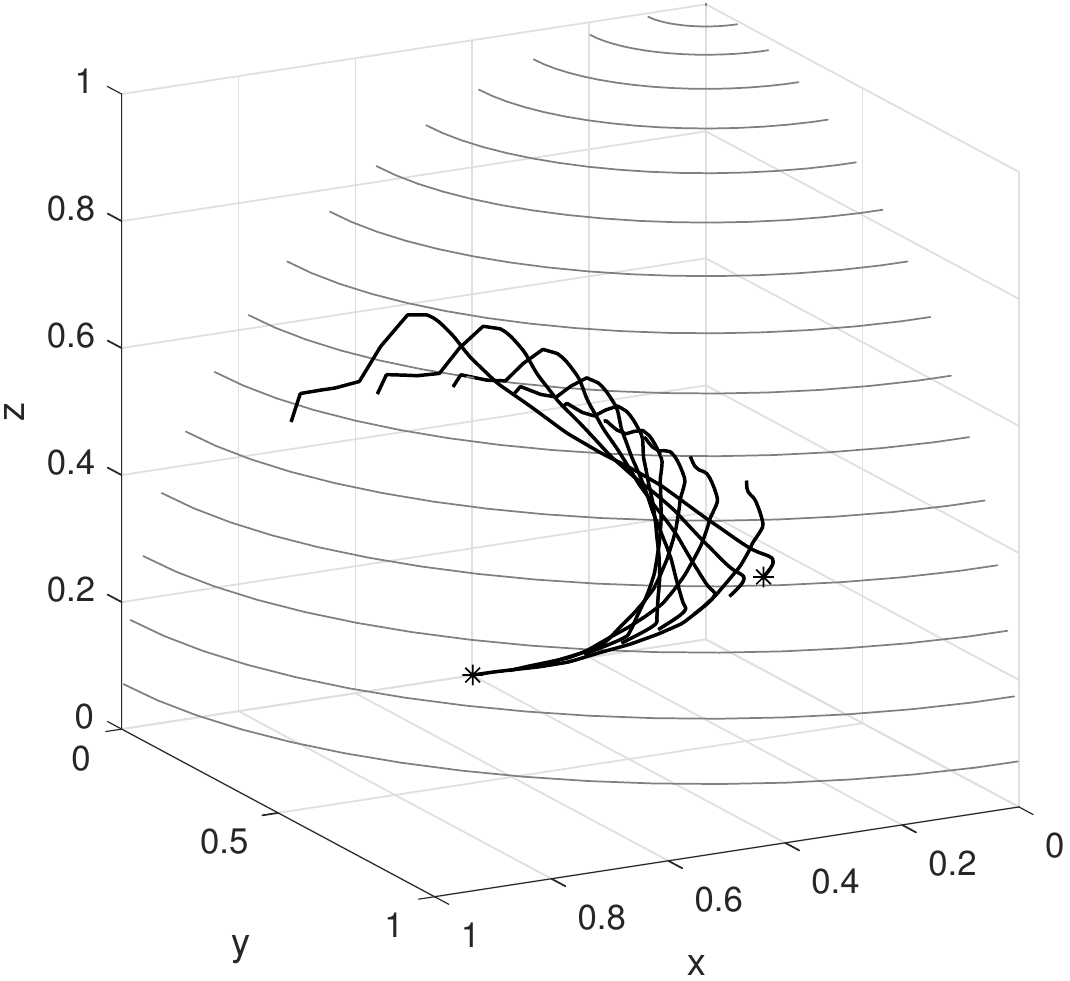}
		\includegraphics[width=0.225\linewidth]{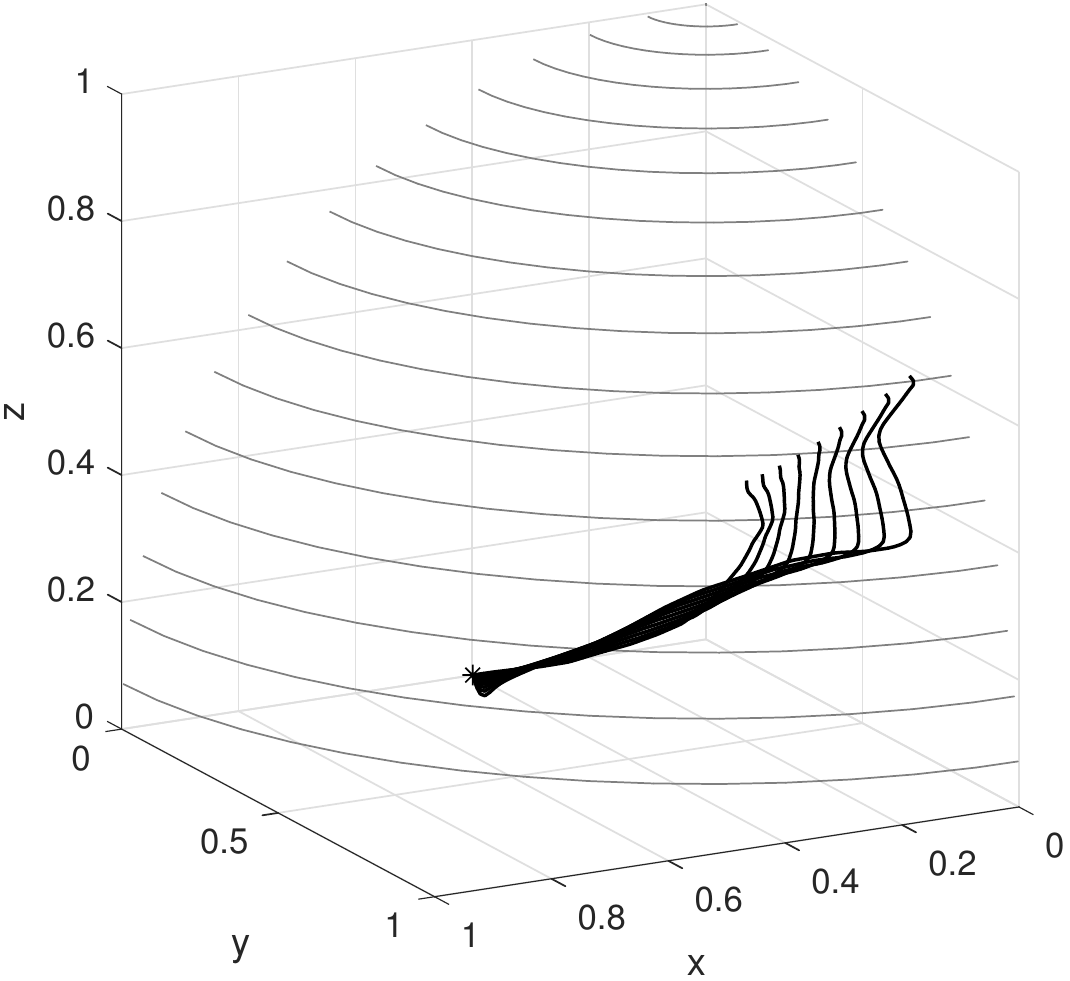}
		\includegraphics[width=0.225\linewidth]{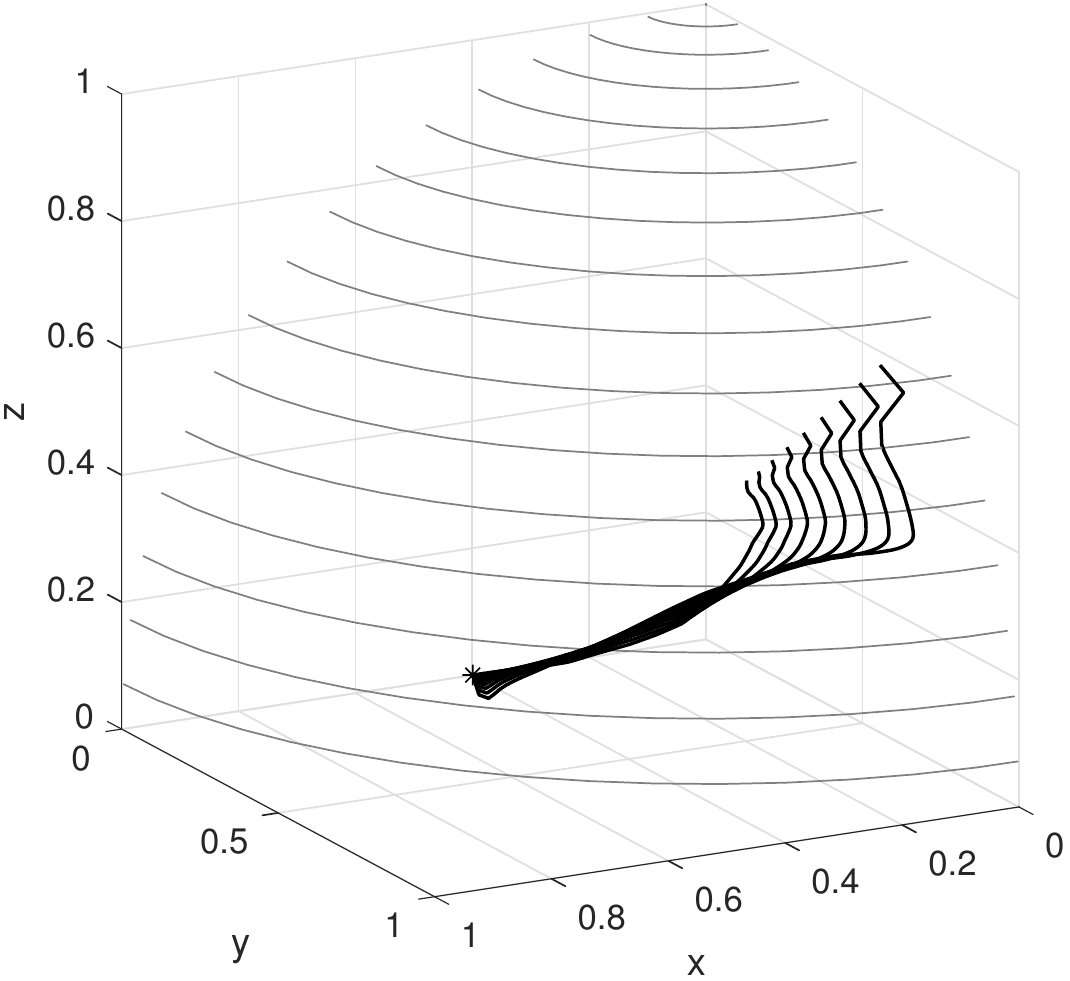}
	\end{center}
	\caption{Examples of geodesics between two curves in $AC([0,1], S^2)$, $\mathcal{S}^{AC}([0,1], S^2)$, $AC([0,1], S^2)/\operatorname{SO}(3)$, $\mathcal{S}^{AC}([0,1], S^2)/\operatorname{SO}(3)$. Starting points of the curves are marked with a~$\star$.}
	\label{fig:short}
\end{figure*}


\subsection{Comparing Curves in $M$ up to Reparameterization}\label{ModRepa}
In the following, we are interested in comparing the shape of unparameterized curves. To mathematically 
formulate our matching problem we define an equivalence relation on the space $AC([0,1], M)$ as follows: Given elements $\beta_0$ and $\beta_1$ of $AC([0,1], M)$, define $\beta_0\sim\beta_1$ if and only if their orbits $\beta_0\Gamma$ and $\beta_1\Gamma$ have the same closure  with respect to the geodesic distance metric on $AC([0,1], M)$ we defined before, see \cite{LaRoKl2015}. We then define the shape space 
$\mathcal{S}^{AC}([0,1],M)$ to be the set of equivalence classes under $\sim$ and we refer to the  ``shape" of a curve as its equivalence class in  
$\mathcal{S}^{AC}([0,1], M)$.  
The space $\mathcal{S}^{AC}([0,1], M)$ is not a manifold, but we can endow $\mathcal{S}^{AC}([0,1], M)$ with a metric so that it becomes a metric space \cite{LaRoKl2015,Bru2016}.

By proposition \ref{Minvariance}, the distance function $d$ on $AC([0,1],M)$ is reparametrization invariant. We now consider the quotient space  $\mathcal{S}^{AC}([0,1], M)= AC([0,1], M)/\Gamma$. The induced distance is defined to be
\begin{align}
&d([\beta_1], [\beta_2])\notag\\
=&\inf_{\substack{y\in K\\ \gamma\in\Gamma}}\left(d^2(\alpha_1(0), \alpha_2(0)y)+\|q_1-y^{-1}(q_2,\gamma)y\|^2\right)^{1/2}.
\end{align}
To compute the geodesic between $[\beta_1]$ and $[\beta_2]$, we need to find the optimal $y\in K$ and $\gamma\in\Gamma$ to minimize this distance.  
Since the action of $K$ and the action of $\Gamma$ on $G\times L^2(I,\mathfrak{k}^{\perp})$ commute with each other, we use the gradient method and the dynamic programming algorithm \cite{SrKl2016} to obtain a satisfactory convergence of both $y$ and $\gamma$. 

For curves in values in $\mathbb R^d$,
 the existence of optimal repara\-metrizations has been shown for PL curves \cite{LaRoKl2015}  and $C^1$ curves \cite{Bru2016}. In future work we plan to generalize these results to curves with values in homogenous spaces.

\subsection{Comparing Curves in $M$ up to Rigid Motions}\label{ModRigidMotion}
The group $G$ acts on $M$ as its group of rigid motions. For certain applications, we might want to calculate distances and geodesics in the quotient space $AC([0,1],M)/G$. By proposition \ref{Minvariance}, the distance function on $AC([0,1],M)$ is invariant under the left action of $G$.
On the space $AC([0,1],M)/G$, the distance function is then defined by 
\begin{align}
&d([\beta_1], [\beta_2])\notag\\
=&\inf_{\substack{y\in K\\ g\in G}}\left(d^2(\alpha_1(0), g\alpha_2(0)y)+\|q_1-y^{-1}q_2 y\|^2\right)^{1/2}.
\end{align}
Since $g$ appears only in the first summand in this formula, the minimum value of this distance can be achieved as follows: First choose $y\in K$ to minimize the second term, and then choose $g=\alpha_2(0)y\alpha_1^{-1}(0)$, which will result in $d(\alpha_1(0), g\alpha_2(0)y)=0$ and the simplified distance formula:
\begin{align}
d([\beta_1], [\beta_2])=\inf_{y\in K}\|q_1-y^{-1}q_2 y\|.
\end{align}
As a result, the gradient calculation is much simpler in this case.

\subsection{Comparing Curves in $AC([0,1],M)$ up to Rigid Motions and Reparametrizations}\label{ModRigidMotionRepa}
Now we consider both the action of $\Gamma$ and  of $G$ on $AC([0,1],M)$ simultanously. We want to mod out both of these actions and focus on the quotient $AC([0,1],M)/(G\times\Gamma)$. The distance function is defined to be
\begin{align}
&d([\beta_1], [\beta_2])\notag\\
=&\inf_{\substack{y\in K\\ \gamma\in\Gamma, g\in G}}\left(d^2(\alpha_1(0), g\alpha_2(0)y)+\|q_1-y^{-1}(q_2,\gamma) y\|^2\right)^{1/2}.
\end{align}
Similar as in section \ref{ModRigidMotion}, the distance function can be simplified to
\begin{align}
d([\beta_1], [\beta_2])
=\inf_{\substack{y\in K\\ \gamma\in\Gamma}}\|q_1-y^{-1}(q_2,\gamma) y\|.
\end{align}
The optimal $y\in K$ and $\gamma\in\Gamma$ can now be found using the gradient and the dynamic programming algorithm, and we then set $g=\alpha_2(0)y\alpha_1^{-1}(0)$.

\section{Curves with values on $S^n$}
In this section we want to describe the important special case of curves with values on $S^n$ in more detail. 
To view the sphere as a homogenous space we consider the  Lie group
\begin{equation}
	\operatorname{SO}(n)=\{A\in GL(n,\mathbb{R})| A^tA=AA^t=I, \det(A)=1\}
\end{equation} with corresponding Lie algebra
\begin{equation}
	\mathfrak{so}(n)=\{X\in M(n,\mathbb{R})| X+X^t=0\}.
\end{equation}
It is well known that $S^n\cong\operatorname{SO}(n+1)/\operatorname{SO}(n)$, 
where we identify $\operatorname{SO}(n)$ as a subgroup of $\operatorname{SO}(n+1)$ using the inclusion
$	A \to \left ( \begin{array}{lcr}
	A & 0 \\ 0 & 1
	\end{array}  \right )$.
Let ${\bf n}=(0,...0,1)\in S^n$ be the north pole. Then the quotient map $\pi: \operatorname{SO}(n+1)\to S^n$ is defined by $\pi(\alpha)=\alpha{\bf n}$.

To use the previously developed theory we need to define  a Riemannian metric on $\operatorname{SO}(n+1) $ that is bi-invariant with respect to  $\operatorname{SO}(n)$:
for any pair of tangent vectors $u$ and $v$ in $T_g\operatorname{SO}(n+1)$ with $g\in \operatorname{SO}(n+1)$, we define the inner product
$
	\langle u,v\rangle_g=\operatorname{tr}(u^tv).
$	
It is straightforward to check that this metric is indeed bi-invariant under multiplication by elements of $\operatorname{SO}(n+1)$ and thus in particular by multiplication with elements of $\operatorname{SO}(n)\subset\operatorname{SO}(n+1)$. Using the bi-invariance of the metric, the Riemannian exponential map at the identity is equal to the Lie group exponential \cite{Petersen1998} and is thus of the form $v\to e^v$. The inverse Riemannian exponential map at identity is  simply the log function $g\to \log(g)$. 

The following well-known lemma will be useful in calculating the lift of paths in $S^n$ to paths in $SO(n+1)$:
\begin{lemma}
	Let $p, q\in S^n$ and $p\neq-q$, then the most efficient rotation that takes $p\to q$ can be expressed as
	\begin{align}
		R_{p, q}=\left(I-\dfrac{2}{|p+q|^2}(p+q)(p^t+q^t)\right)(I-2pp^t).
	\end{align}
	By most efficient, we mean the rotation closest to $I$ with respect to the bi-invariant metric on $\operatorname{SO}(n+1)$.
\end{lemma}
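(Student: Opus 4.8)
The plan is to verify directly that the claimed matrix $R_{p,q}$ is an orthogonal matrix of determinant one taking $p$ to $q$, and then to argue it is the one closest to the identity. The key observation is that $R_{p,q}$ is presented as a product of two reflections. First I would record that for any unit vector $v\in S^n$, the Householder matrix $H_v := I - 2vv^t$ is symmetric, orthogonal (since $H_v^t H_v = I - 4vv^t + 4v(v^tv)v^t = I$), and has determinant $-1$; it fixes the hyperplane $v^\perp$ and sends $v\mapsto -v$. Writing $w := \frac{p+q}{|p+q|}$ (well-defined since $p\neq -q$), the first factor is exactly $H_w$ and the second is $H_p$, so $R_{p,q} = H_w H_p$ is a product of two reflections, hence orthogonal with $\det = (-1)(-1) = 1$, i.e. $R_{p,q}\in\operatorname{SO}(n+1)$.

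Next I would check $R_{p,q}p = q$. Applying $H_p$ first gives $H_p p = -p$. Then I compute $H_w(-p) = -p + 2w(w^tp)$. Since $w^t p = \frac{(p+q)^tp}{|p+q|} = \frac{1 + p^tq}{|p+q|}$ and $w = \frac{p+q}{|p+q|}$, we get $H_w(-p) = -p + \frac{2(1+p^tq)}{|p+q|^2}(p+q)$. Because $|p+q|^2 = 2 + 2p^tq = 2(1+p^tq)$, the scalar $\frac{2(1+p^tq)}{|p+q|^2}$ equals $1$, so $H_w(-p) = -p + (p+q) = q$, as desired. (This also shows $R_{p,q}$ reduces to $I$ when $p=q$, since then $w = p$ and $H_pH_p = I$.)

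For the ``most efficient'' claim, the point is that $R_{p,q}$ acts as a rotation by the geodesic angle $\theta = \arccos(p^tq)$ in the two-plane $\operatorname{span}\{p,q\}$ and as the identity on its orthogonal complement: indeed $H_wH_p$ is a composition of reflections across two hyperplanes whose normals $p$ and $w$ both lie in $\operatorname{span}\{p,q\}$, so it fixes $\operatorname{span}\{p,q\}^\perp$ pointwise and rotates the plane by twice the angle between those hyperplanes, which works out to $\theta$. Since the bi-invariant distance from $I$ to a rotation is determined by its rotation angles (for $\langle u,v\rangle = \operatorname{tr}(u^tv)$ one has $d(I,R)^2 = \sum \theta_i^2$ over the rotation angles $\theta_i\in[0,\pi]$), any rotation taking $p$ to $q$ must rotate $\operatorname{span}\{p,q\}$ by an angle that is $\theta$ modulo the identification of the plane, so the minimal-norm choice is exactly the one with a single rotation angle $\theta$ and no rotation off the plane — which is $R_{p,q}$.

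The main obstacle is making the last paragraph fully rigorous: one has to pin down that $R_{p,q}$ really rotates the $\{p,q\}$-plane by precisely $\theta$ (not $\pi - \theta$ or $\theta + \pi$) and fixes the complement, and then argue that no competing rotation $R'$ with $R'p = q$ can have smaller $\sum\theta_i^2$. A clean way to do this is to diagonalize: on $\operatorname{span}\{p,q\}^\perp$ the constraint $R'p=q$ imposes nothing, so minimality forces $R'$ to be the identity there; on the plane, $R'$ must carry the unit vector $p$ to the unit vector $q$, and among plane rotations the unique one of minimal angle doing so has angle $\arccos(p^tq)\in[0,\pi]$. Everything else — the reflection identities, the determinant count, the computation $R_{p,q}p=q$ — is routine linear algebra that I would state briefly and leave to the reader, exactly as the phrase ``well-known lemma'' in the paper suggests.
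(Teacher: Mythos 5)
The paper itself offers no proof of this lemma (it is invoked as ``well-known''), so the only question is whether your argument stands on its own. The algebraic half does: writing $R_{p,q}=H_wH_p$ with $w=(p+q)/|p+q|$, your verifications that $R_{p,q}\in\operatorname{SO}(n+1)$ and $R_{p,q}p=q$ are complete and correct, as is the observation that $R_{p,q}$ fixes $\operatorname{span}\{p,q\}^{\perp}$ pointwise and rotates $\operatorname{span}\{p,q\}$ by $\theta=\arccos(p^tq)$.

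The genuine gap is in the minimality step, exactly where you flagged it, and your proposed fix does not close it: saying ``on $\operatorname{span}\{p,q\}^{\perp}$ the constraint imposes nothing, so minimality forces $R'$ to be the identity there; on the plane, $R'$ must carry $p$ to $q$'' presupposes that a competitor $R'$ with $R'p=q$ preserves the splitting $\operatorname{span}\{p,q\}\oplus\operatorname{span}\{p,q\}^{\perp}$, which a general such $R'$ does not, so one cannot speak of its action ``on the plane'' and ``on the complement'' separately. What is needed is a lower bound valid for \emph{every} $R'\in\operatorname{SO}(n+1)$ with $R'p=q$. Two standard ways to get it: (i) bring $R'$ to canonical form (invariant $2$-planes with angles $\theta_i\in[0,\pi]$ plus a fixed subspace); its skew logarithm $X$ has operator norm $\max_i\theta_i$, so the curve $t\mapsto e^{tX}p$ joins $p$ to $q$ on $S^n$ with length $|Xp|\le\max_i\theta_i$, giving $\theta\le\max_i\theta_i\le\bigl(\sum_i\theta_i^2\bigr)^{1/2}$ and hence $d(I,R')^2=2\sum_i\theta_i^2\ge 2\theta^2=d(I,R_{p,q})^2$; equality forces a single rotation plane containing $p$ (hence equal to $\operatorname{span}\{p,q\}$) and identity elsewhere, which pins down $R'=R_{p,q}$ since $p\ne-q$. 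Or (ii) note that the orbit map $g\mapsto gp$ is, up to the constant factor $\sqrt2$, a Riemannian submersion onto $S^n$, so $d(I,R')\ge\sqrt2\,d_{S^n}(p,q)$, with equality only for the horizontal lift of the unique minimal great-circle arc from $p$ to $q$, which is $R_{p,q}$. Finally, a small normalization slip: with $\langle u,v\rangle=\operatorname{tr}(u^tv)$ one has $d(I,R)^2=2\sum_i\theta_i^2$, not $\sum_i\theta_i^2$; this does not affect the argmin, but it should be stated correctly.
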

This formula is only valid if $p\neq-q$, since if $p=-q$ there is no unique shortest rotation taking $p$ to $q$.
Using the above lemma we obtain the following algorithm for lifting paths: 
\begin{figure*}
	\begin{center}
		\includegraphics[width=0.233\linewidth]{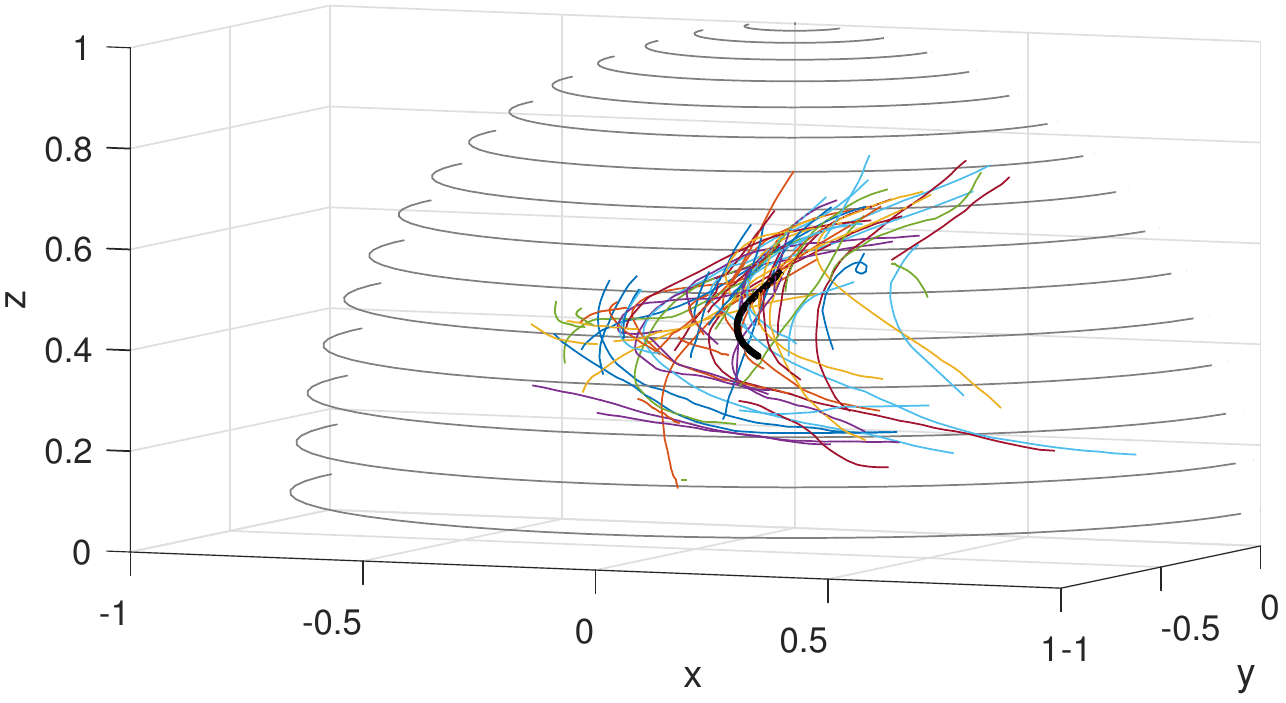}\hskip 0.5in
		\includegraphics[width=0.233\linewidth]{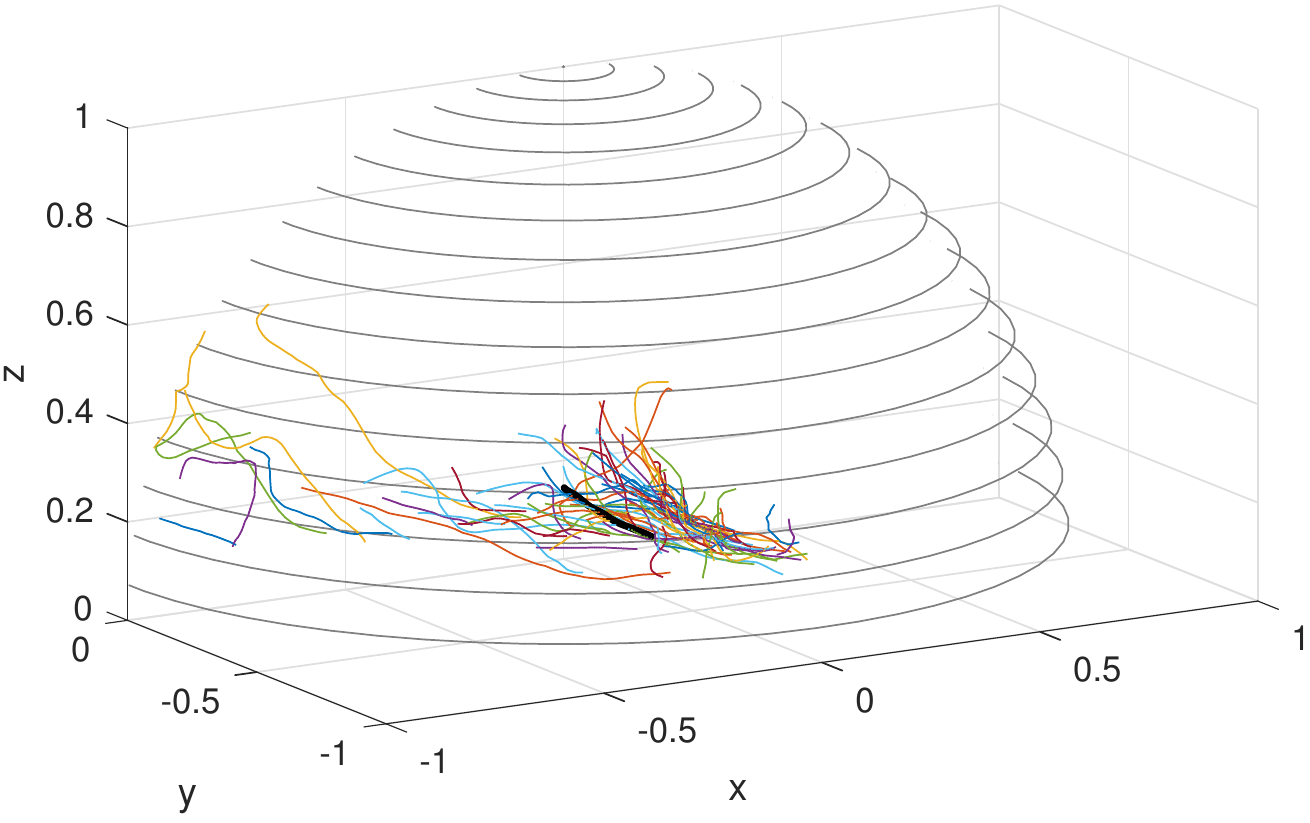}\hskip 0.5in
		\includegraphics[width=0.233\linewidth]{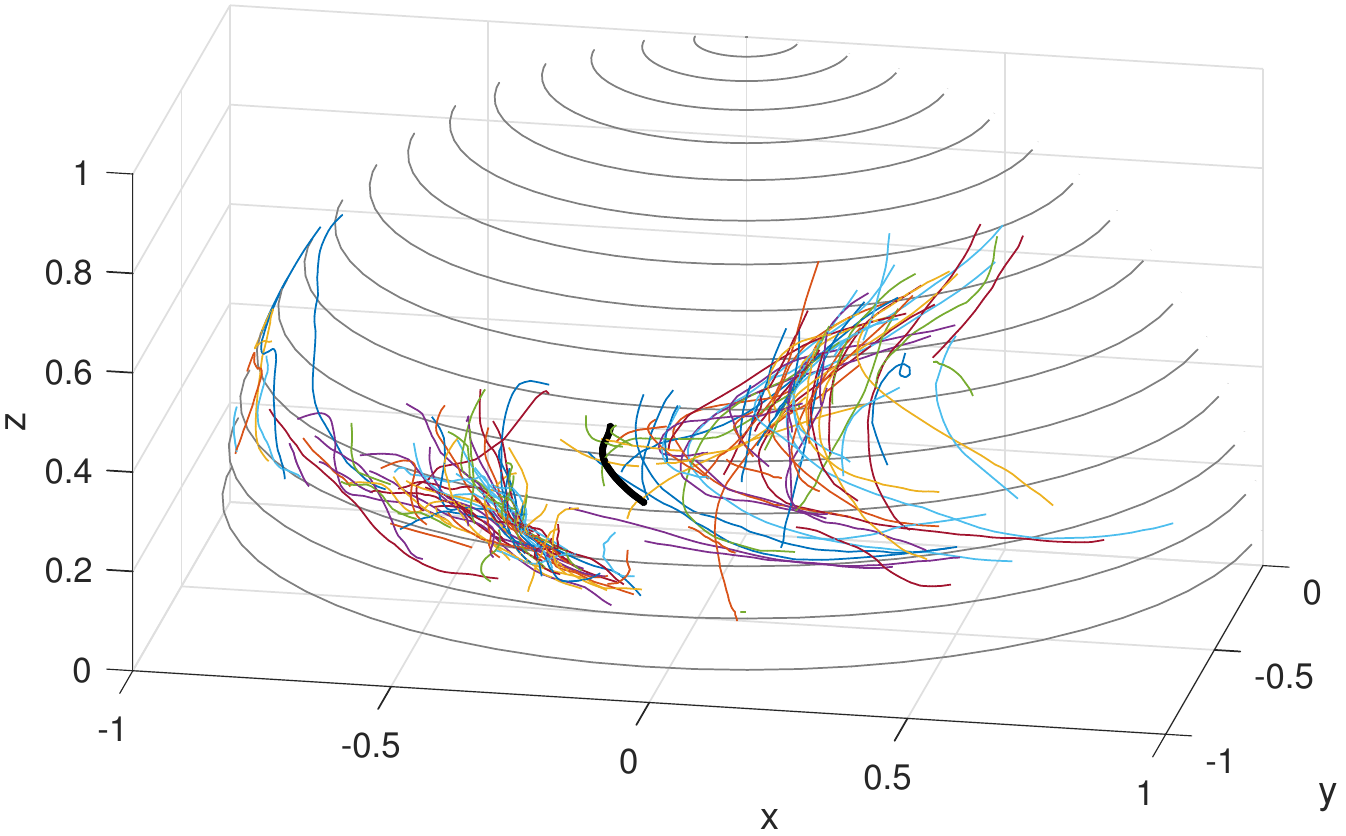}\\
		
		\includegraphics[width=0.233\linewidth]{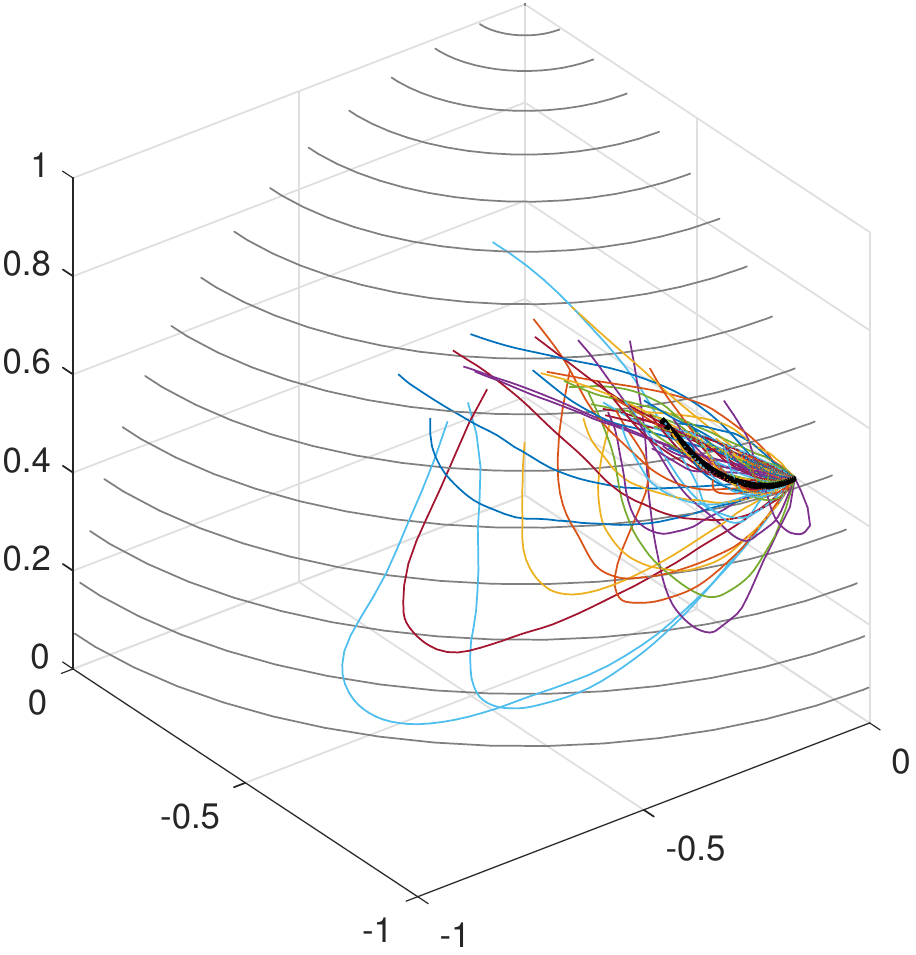}\hskip 0.5in
		\includegraphics[width=0.233\linewidth]{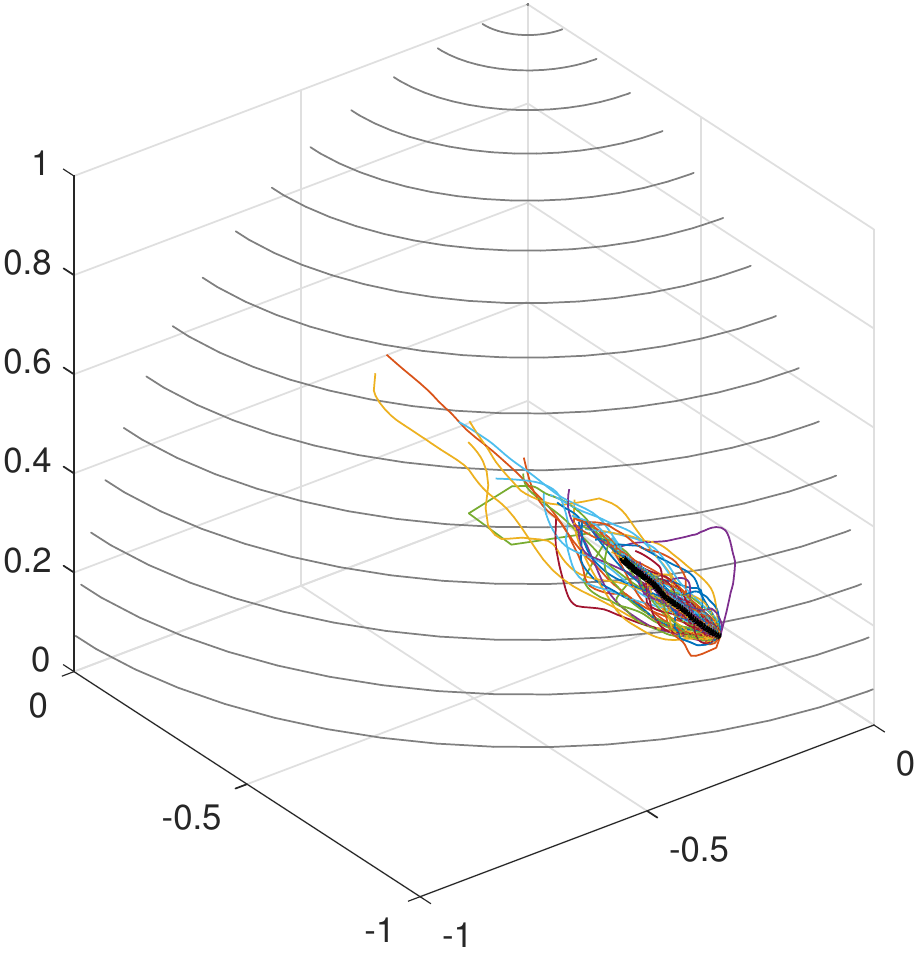}\hskip 0.5in
		\includegraphics[width=0.233\linewidth]{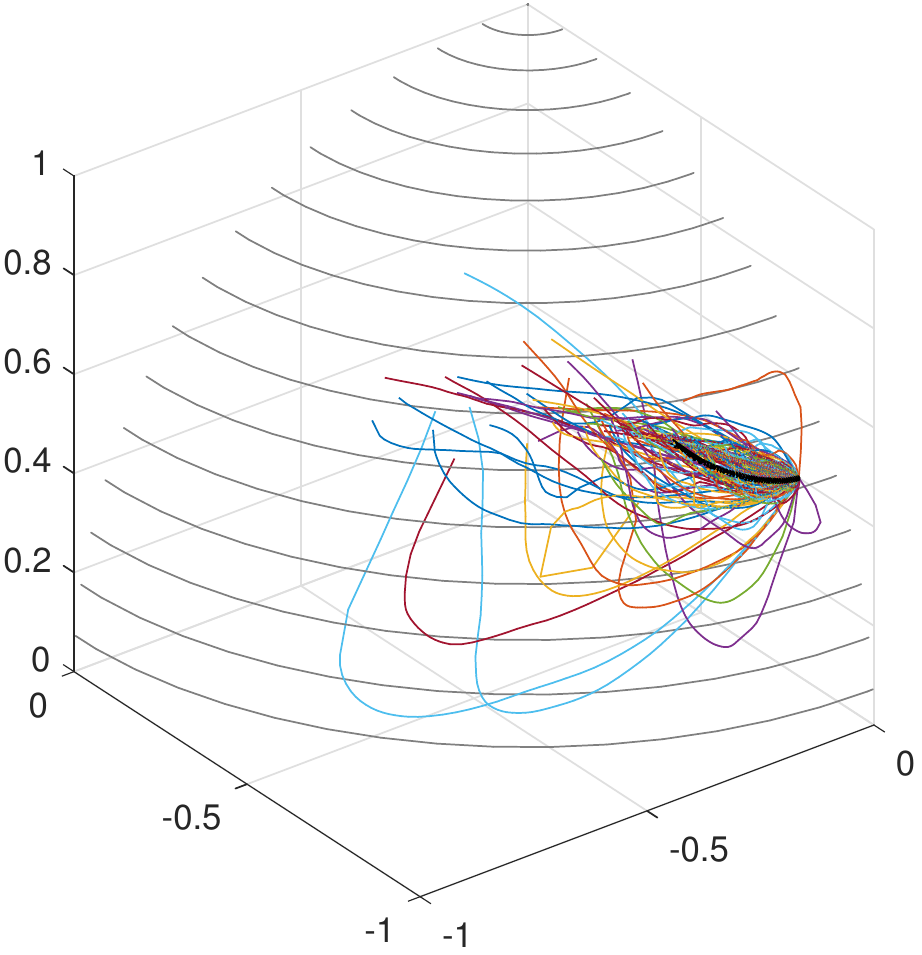}
	\end{center}
	\caption{In $\mathcal{S}^{AC}([0,1], S^2)$ (first row) and in $\mathcal{S}^{AC}([0,1], S^2)/\operatorname{SO}(3)$ (second row): the Karcher mean (black) of 75 hurricane tracks in Atlantic database, of 75 hurricane tracks in North Central Pacific hurricane database and of the combined 150 hurricane tracks (from left to right).}
	\label{KarcherMeans}
\end{figure*}
Let $\beta\in AC([0,1],S^n)$. Then a lift $\alpha\in AC([0,1], \operatorname{SO}(n+1))$ of $\beta$ can be computed as follows:
\begin{enumerate}[(1)]
	\item If $\beta(0)=-{\bf n}$, set 
	\begin{align}
		\alpha(0)=\left(  \begin{array}{lcr}
		-1 & 0 & 0\\0 & I_{n-1} & 0\\0 & 0 & -1
		\end{array} \right),
	\end{align} where $I_{n-1}$ is the $(n-1)\times(n-1)$ identity matrix. For
	$\beta(0)\neq-{\bf n}$, set $\alpha(0)=R_{{\bf n},\beta(0)}$.
	\item Given $\alpha(t)$, set $\alpha(t+\Delta t)=R_{\beta(t), \beta(t+\Delta t)}\alpha(t)$ for a chosen step size $\Delta t$.
\end{enumerate}

\begin{proposition}
	Given $\beta\in AC([0,1],S^n)$, by using the lift algorithm described above, the lift $\alpha$ is in  $AC^{\perp}([0,1], \operatorname{SO}(n+1))$, that is, it satisfies the two properties:
	\begin{enumerate}
		\item  $\beta(t)=\pi(\alpha(t))$ for all $t\in I$,
		\item  $\dot{\alpha}(t)\perp \alpha(t)T_I\operatorname{SO}(n)$ for all $t\in I$.
	\end{enumerate}
\end{proposition}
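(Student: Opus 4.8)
The plan is to interpret the algorithm, in the continuum limit $\Delta t\to0$, as producing the absolutely continuous solution $\alpha$ of the linear matrix ODE $\dot\alpha(t)=\xi(t)\alpha(t)$, with $\alpha(0)$ given by step~(1) and $\xi(t)=\frac{d}{ds}\big|_{s=t}R_{\beta(t),\beta(s)}\in\mathfrak{so}(n+1)$, and then to read off the two properties from this description. The iteration is well defined and converges to this ODE solution: by uniform continuity of $\beta$ on $[0,1]$ we have $\beta(t+\Delta t)\neq-\beta(t)$ for small $\Delta t$, and $q\mapsto R_{p,q}$ is smooth away from $q=-p$; since $\beta\in AC$ we get $\xi\in L^1([0,1],\mathfrak{so}(n+1))$, so the ODE has a unique $AC$ solution, and because $\xi(t)\in\mathfrak{so}(n+1)$ for a.e.\ $t$ while $\alpha(0)\in\operatorname{SO}(n+1)$, the solution stays in $\operatorname{SO}(n+1)$. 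This gives $\alpha\in AC([0,1],\operatorname{SO}(n+1))$.

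For property~(1) I would argue discretely. The matrix in step~(1) for the case $\beta(0)=-\mathbf{n}$ is orthogonal, has determinant $1$, and sends $\mathbf{n}$ to $-\mathbf{n}$; in the other case $R_{\mathbf{n},\beta(0)}\mathbf{n}=\beta(0)$ by the Lemma. Hence $\pi(\alpha(0))=\alpha(0)\mathbf{n}=\beta(0)$. Since $R_{p,q}p=q$ (also from the Lemma), an induction on the nodes gives $\alpha(t_{k+1})\mathbf{n}=R_{\beta(t_k),\beta(t_{k+1})}\alpha(t_k)\mathbf{n}=R_{\beta(t_k),\beta(t_{k+1})}\beta(t_k)=\beta(t_{k+1})$, and passing to the limit yields $\pi(\alpha(t))=\alpha(t)\mathbf{n}=\beta(t)$ for all $t\in I$. (Once $\xi$ is computed one can alternatively note that $\gamma(t):=\alpha(t)\mathbf{n}$ and $\beta$ solve the same linear ODE $\dot y=\xi y$ with the same initial value, so they coincide by Carath\'eodory uniqueness.)

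The computation of $\xi(t)$ is the analytic heart of the proof. Differentiating $R_{p,q}=\bigl(I-\frac{2}{|p+q|^2}(p+q)(p+q)^t\bigr)(I-2pp^t)$ with respect to $q$ at $q=p$, and simplifying using $|p|=|q|=1$, $R_{p,p}=I$, and $p^t\beta'(t)=0$ (which holds a.e.\ because $|\beta|\equiv1$), one obtains $\xi(t)=\beta'(t)\beta(t)^t-\beta(t)\beta'(t)^t$. For property~(2), left invariance of the metric on $\operatorname{SO}(n+1)$ makes the condition $\dot\alpha(t)\perp\alpha(t)T_I\operatorname{SO}(n)$ equivalent to $\alpha(t)^{-1}\dot\alpha(t)\perp\mathfrak{k}$, where $\mathfrak{k}=T_I\operatorname{SO}(n)$ sits in $\mathfrak{so}(n+1)$ as the block $\left(\begin{smallmatrix}X&0\\0&0\end{smallmatrix}\right)$, $X\in\mathfrak{so}(n)$; a one-line computation with the trace inner product identifies $\mathfrak{k}^{\perp}$ as the set of matrices $\left(\begin{smallmatrix}0&c\\-c^t&0\end{smallmatrix}\right)$, $c\in\mathbb{R}^n$. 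Using $\dot\alpha=\xi\alpha$ together with property~(1) in the form $\beta(t)^t\alpha(t)=\mathbf{n}^t$ (valid since $\alpha$ is orthogonal),
\[
\alpha^{-1}\dot\alpha=\alpha^{-1}\bigl(\beta'\beta^t-\beta\beta'^t\bigr)\alpha=(\alpha^{-1}\beta')\mathbf{n}^t-\mathbf{n}(\alpha^{-1}\beta')^t,
\]
and with $b:=\alpha^{-1}\beta'\in\mathbb{R}^{n+1}$ this equals $b\mathbf{n}^t-\mathbf{n}b^t$, which has exactly the block form above (its last coordinate drops out, consistently with $b\perp\mathbf{n}$ since $\beta'\perp\beta$). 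Thus $\alpha(t)^{-1}\dot\alpha(t)\in\mathfrak{k}^{\perp}$ for a.e.\ $t$, i.e.\ $\alpha\in AC^{\perp}([0,1],\operatorname{SO}(n+1))$.

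I expect the main obstacle to be the explicit differentiation yielding $\xi(t)=\beta'\beta^t-\beta\beta'^t$: one has to expand the derivative of the product of the two reflection-type factors $I-\frac{2}{|p+q|^2}(p+q)(p+q)^t$ and $I-2pp^t$ and collapse the resulting expression using $|p|=1$ and $p^t\beta'=0$. Once that identity is established, the initial-point verification, the induction for property~(1), and the block-form computation for property~(2) are all routine. The only other point requiring care is the passage from the discrete iteration to a genuine $AC$ curve, which I would handle via the $L^1$/product-integral argument indicated above; this could also be relegated to the announced extended journal version.
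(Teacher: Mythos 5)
Your proposal is correct, but it proceeds quite differently from the paper. The paper's proof stays entirely at the discrete level and is geometric: it notes that property (1) is immediate, and for property (2) it argues that each update $\alpha(t+\Delta t)=R_{\beta(t),\beta(t+\Delta t)}\alpha(t)$ uses the \emph{most efficient} rotation, so that, by bi-invariance, $d(\alpha(t),R_{\beta(t),\beta(t+\Delta t)}\alpha(t))=d(I,R_{\beta(t),\beta(t+\Delta t)})$ realizes the minimal distance between the two fibers; the connecting geodesic segment is therefore orthogonal to the orbits, and horizontality is asserted as the ``discrete form'' of property (2), with no explicit passage to the limit. You instead identify the continuum limit of the scheme as the linear ODE $\dot\alpha=\xi\alpha$, compute the generator explicitly, $\xi(t)=\beta'(t)\beta(t)^t-\beta(t)\beta'(t)^t$ (your differentiation of $R_{p,q}$ at $q=p$ using $|p|=1$ and $p^t\beta'=0$ checks out), prove (1) by induction on the nodes from $R_{p,q}p=q$ (or by Carath\'eodory uniqueness, since $\xi\beta=\beta'$), and prove (2) by the direct block computation $\alpha^{-1}\dot\alpha=b\,\mathbf{n}^t-\mathbf{n}\,b^t\in\mathfrak{k}^{\perp}$ with $b=\alpha^{-1}\beta'$. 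What the paper's route buys is brevity and a conceptual explanation (minimality of the rotation is exactly orthogonality to the fiber, in the spirit of Riemannian submersions); what yours buys is an explicit, checkable formula for the horizontal-lift ODE, an actual verification of (1), and a clear statement of where the a.e./absolute-continuity issues live. Both treatments leave the discrete-to-continuum convergence (the product-integral step for $L^1$ generators) unproven --- the paper does not even mention it, and you correctly flag it as the remaining technical point --- so your argument is at least as complete as the published one.
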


\begin{proof}
Obviously, the first property holds. And the discrete form of second also holds, that is, the geodesic between $\alpha(t)$ and $\alpha(t+\Delta t)$ is perpendicular to the orbits with respect to these two points. Assume that $\alpha(t+\Delta t)=y\alpha(t)$ for $y\in \operatorname{SO}(n+1)$. By the bi-invariance of the metric, we have the distance $d(\alpha(t), \alpha(t+\Delta t))=d(\alpha(t), y\alpha(t))=d(I, y).$ It is easy to see that $y$ left translates the orbit $\alpha(t)$ to the orbit $\alpha(t+\Delta t)$, which is equivalent to left translating $\beta(t)$ to $\beta(t+\Delta t)$, that is, $y\beta(t)=\beta(t+\Delta t)$. $R_{\beta(t),\beta(t+\Delta t)}\in \operatorname{SO}(n+1)$ is the most efficient rotation such that $d(\alpha(t), R_{\beta(t),\beta(t+\Delta t)}\alpha(t))=d(I, R_{\beta(t),\beta(t+\Delta t)})$ is smallest, which means the distance between $\alpha(t)$ and $R_{\beta(t),\beta(t+\Delta t)}\alpha(t)$ realizes the shortest possible distance between all pairs of representatives of these two orbits. 
\end{proof}

\section{Applications to hurricane tracks}
Finally we want to demonstrate the effectiveness of the proposed framework using real data.
We consider 75 hurricane tracks from the Atlantic hurricane database and 75 hurricane tracks from the  Northeast and North Central Pacific hurricane database (HURDAT2)\footnote{The data was obtained from the National Hurricane Center website: \url{http://www.nhc.noaa.gov/data/}.}. The data under consideration is depicted in Fig.~\ref{KarcherMeans}. Each hurricane path is represented as a curve in $S^2$, and is discretized as a piecewise-geodesic polygon. Our first step is to calculate the matrix of all pairwise distances. For unparametrized curves using an Intel Core i7-4510U (2.00GHz) machine, the computation of these 11175 boundary value problems took 
less than two minutes \footnote{In the implementation we made use of the one-dimensionality of $K=SO(2)$, which allowed us to solve the minimization over $K$ without the gradient method.}. 
We note that our algorithms are orders of magnitudes faster than the algorithms developed in
\cite{ZhSuKlLeSr2015,LeArBa2015,Lebrigant2016}, while at the same time overcoming the disadvantages of the methods used in \cite{SuKuKlSr2014}. 

\begin{figure}[htbp]
	\begin{center}
		\includegraphics[width=0.49\linewidth]{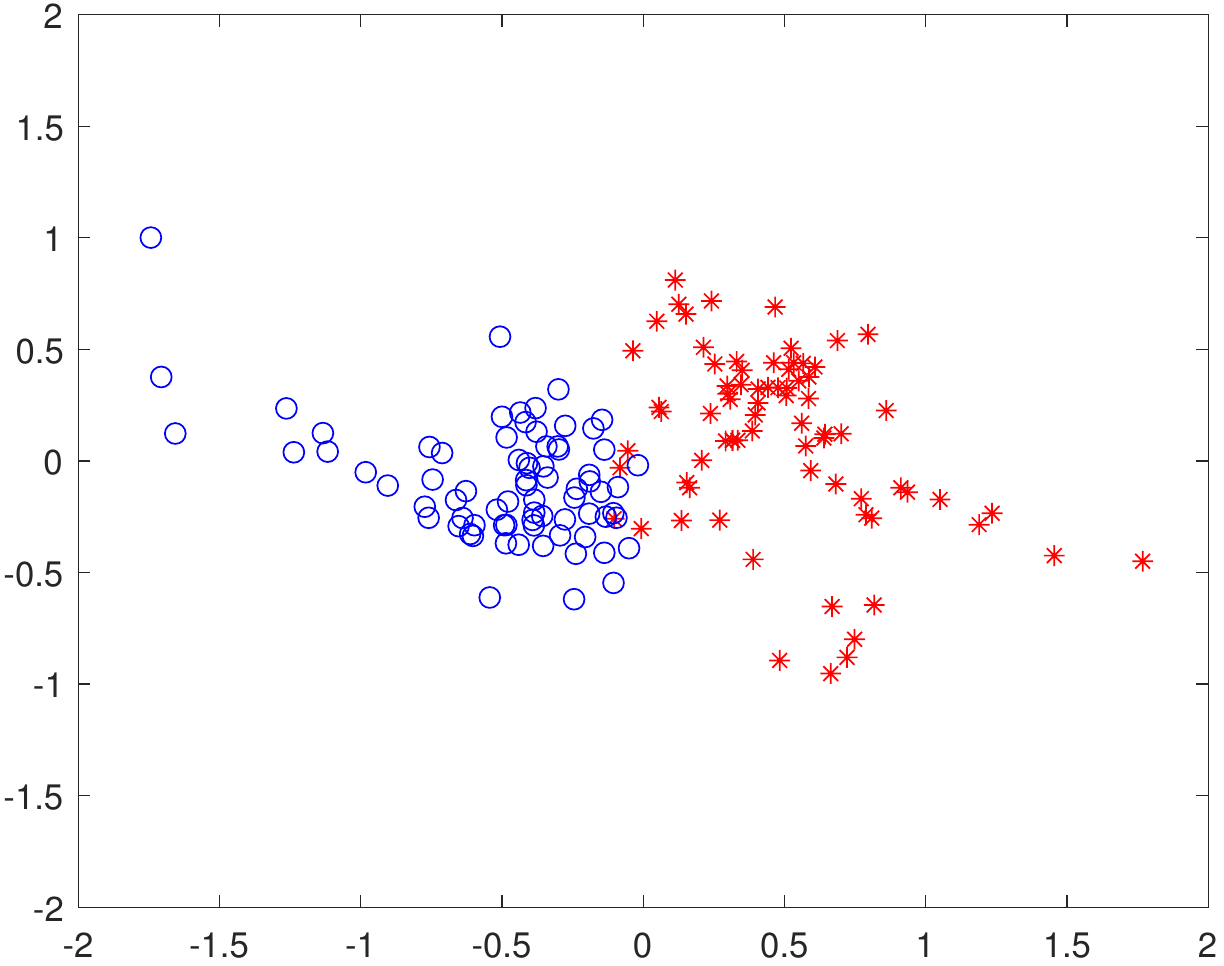}
		\includegraphics[width=0.49\linewidth]{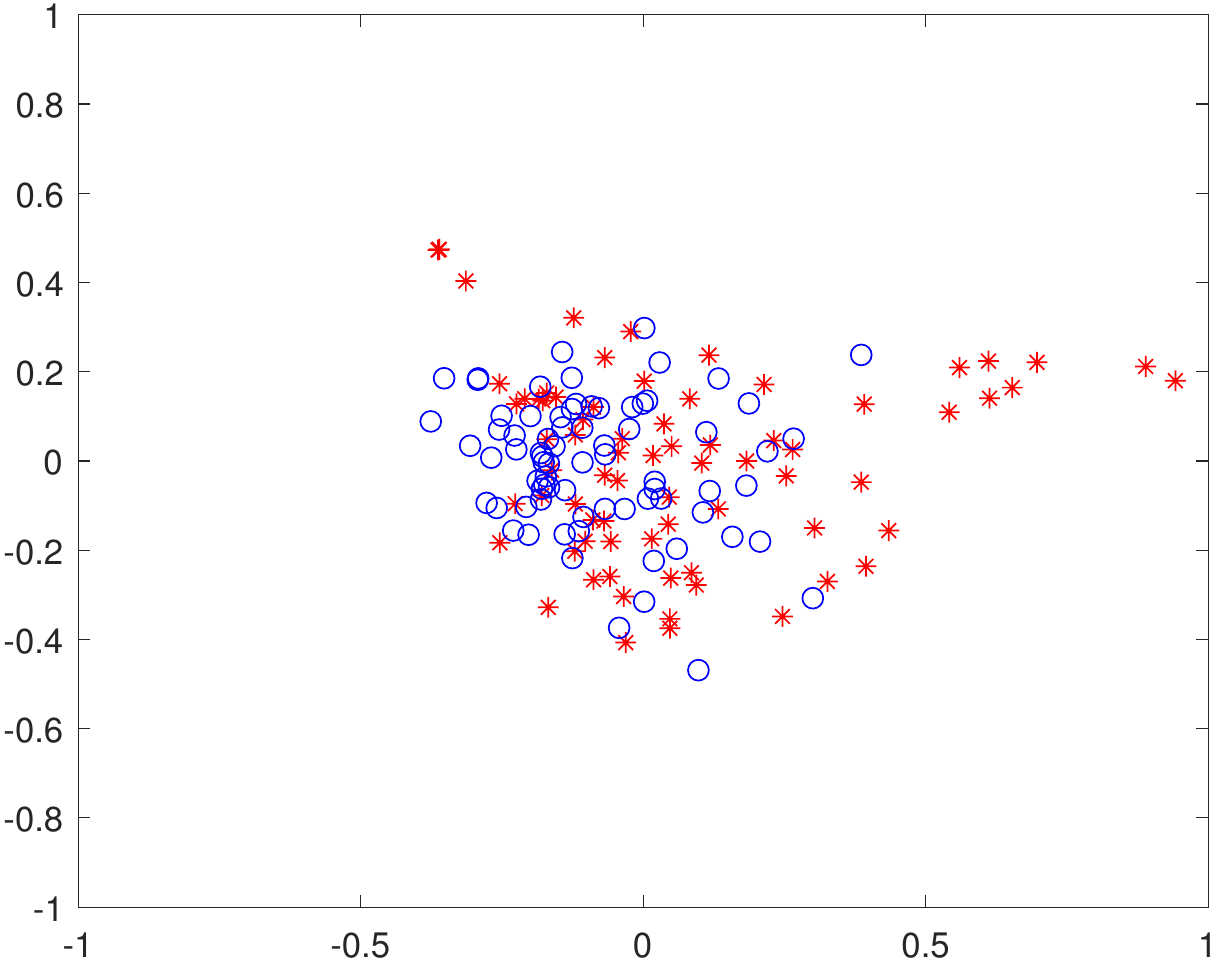}
	\end{center}
	\caption{The distance matrix of 150 hurricane tracks visualized using multi-dimensional scaling in two dimensions. Left: distances calculated in $\mathcal{S}^{AC}([0,1],S^2)$. Right: distances calculated in $\mathcal{S}^{AC}([0,1],S^2)/SO(3)$. Data points representing hurricanes from the Atlantic are marked with a $\star$; hurricanes from the Pacific region with a $\circ$.}
	\label{Clustering}
\end{figure}

In Figure~\ref{Clustering}
we visualized the distance matrices using multi-dimensional scaling \cite{KrWi1978}. 
As one might expect there is clear clustering between the hurricane tracks from the Atlantic region and those of the Northeast and North Central Pacific region if we regard them as elements of
$\mathcal{S}^{AC}([0,1],S^2)$. However if, in addition, we mod out by 
rigid motions, then the obtained distance matrix does not seem to capture this information anymore. This suggests that the clustering in the previous experiment was mainly based on location and that the shape of a hurricane path does not possess enough information to allow for a significant statement on its region of origin. 
Finally we calculate the Karcher mean of all hurricanes and of each of the groups separately as well. 
These results are depicted  in Fig.~\ref{KarcherMeans}. Using the Karcher means as a charts, this potentially allows to linearize the shape space using the corresponding  tangent spaces. As an example, we  show geodesics from the Karcher mean in the direction of the first two principal directions in Fig.~\ref{PrincipalDirection}. It seems that the first principal direction encodes the variety in shape, whereas the second direction seems to mainly reflect the change in the length of the hurricanes.




\begin{figure}[htbp]
	\begin{center}
		\includegraphics[width=0.4\linewidth]{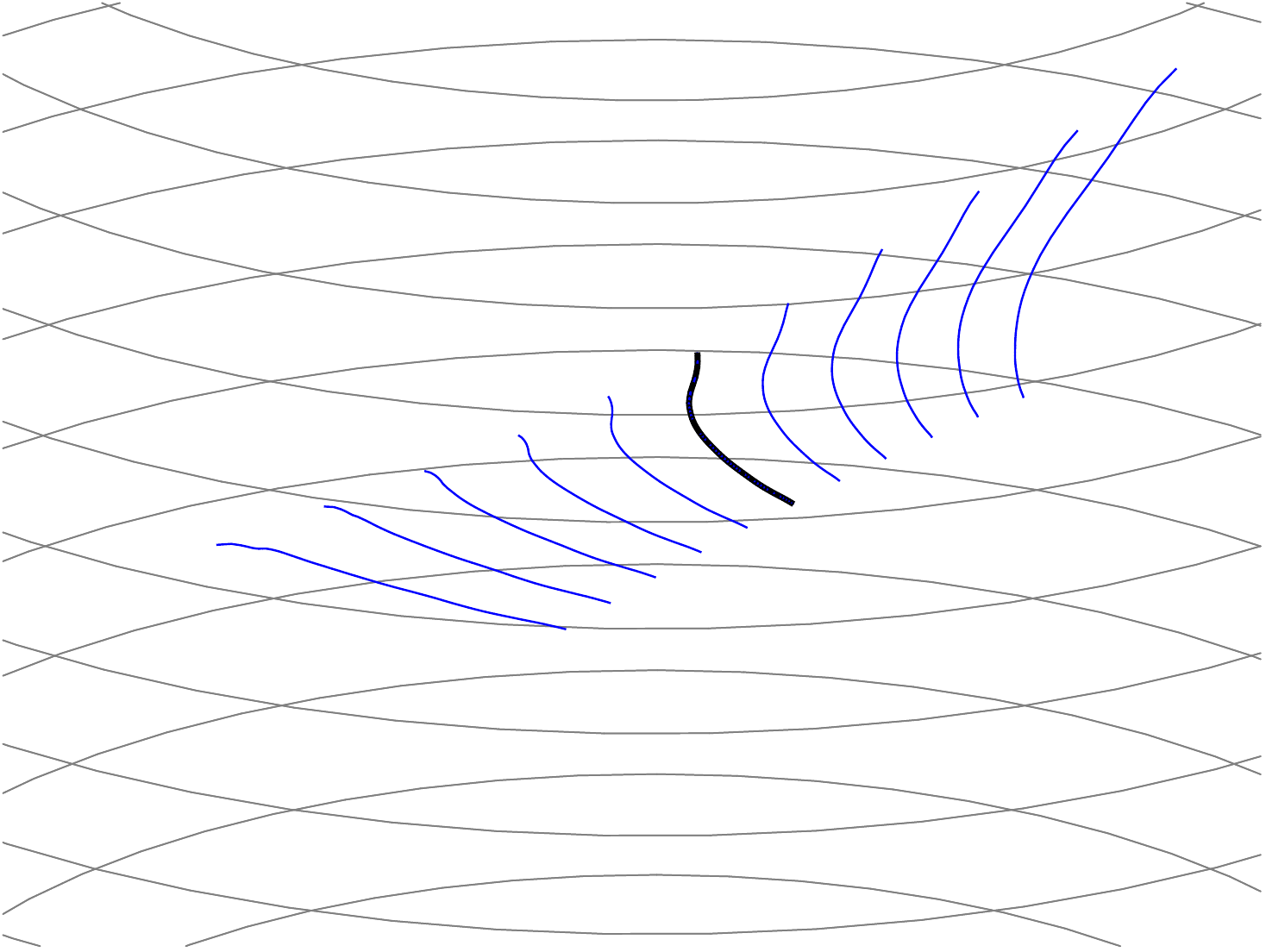}
		\includegraphics[width=0.4\linewidth]{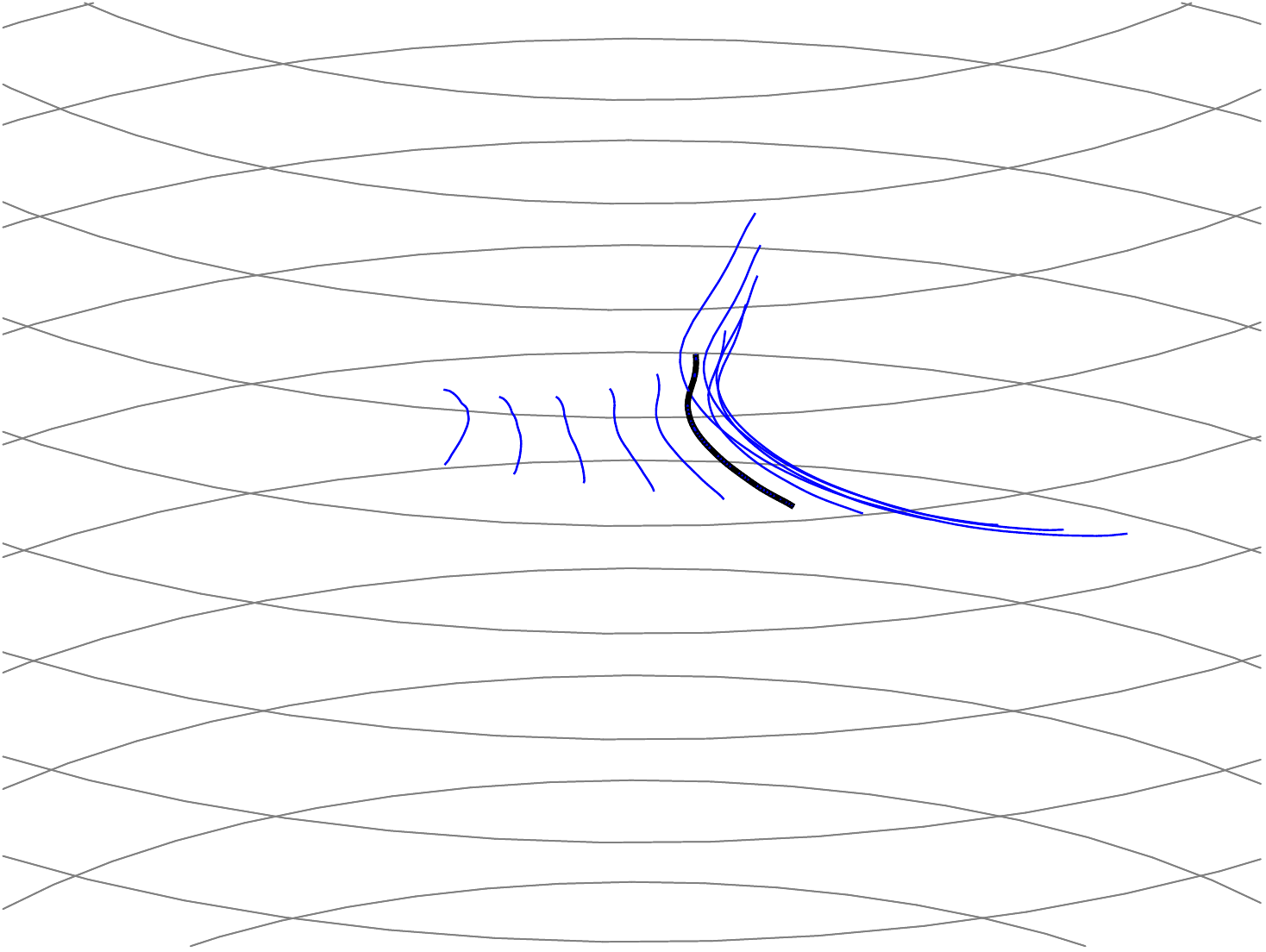}
		\\
		\includegraphics[width=0.4\linewidth]{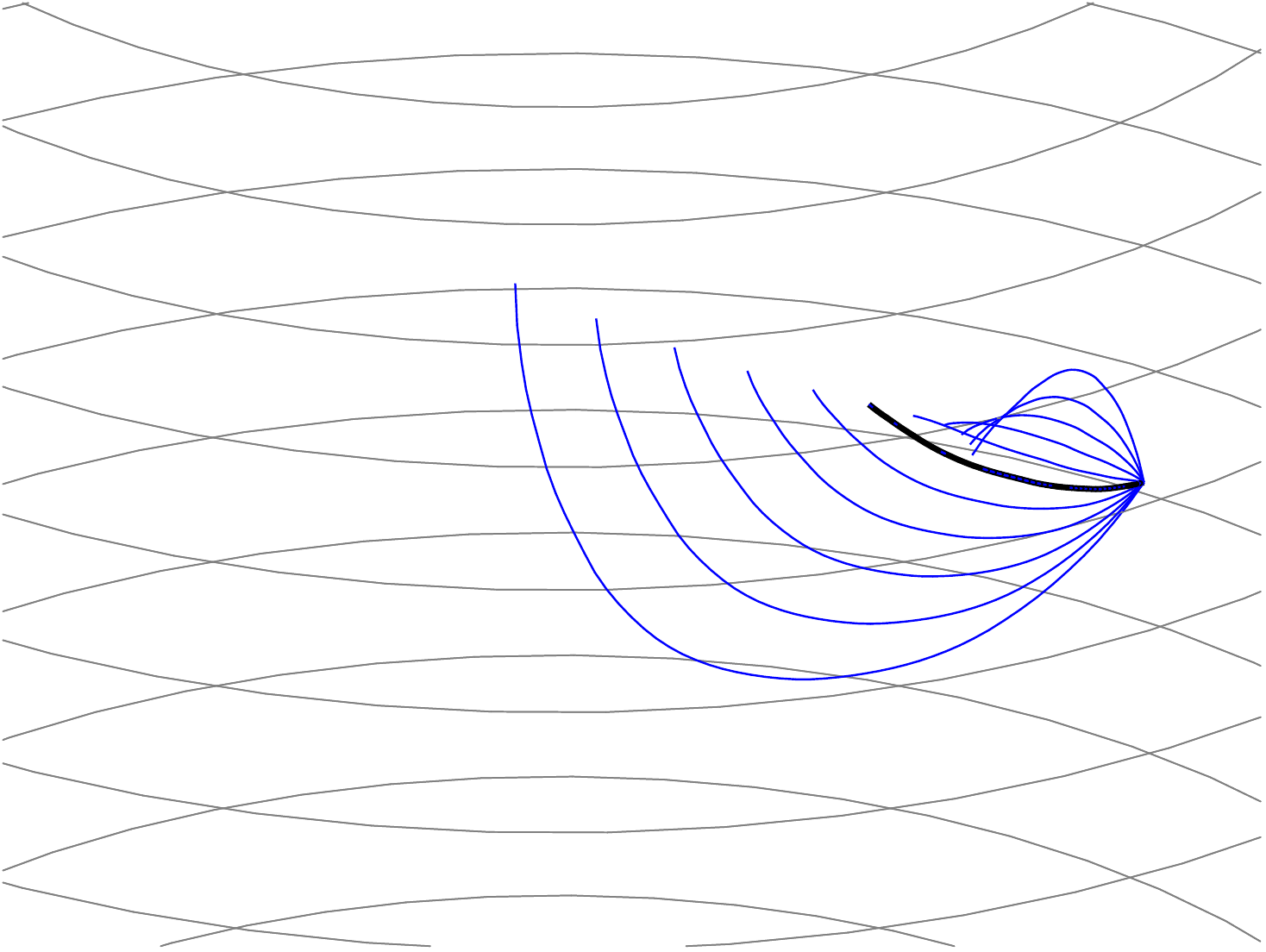}
		\includegraphics[width=0.4\linewidth]{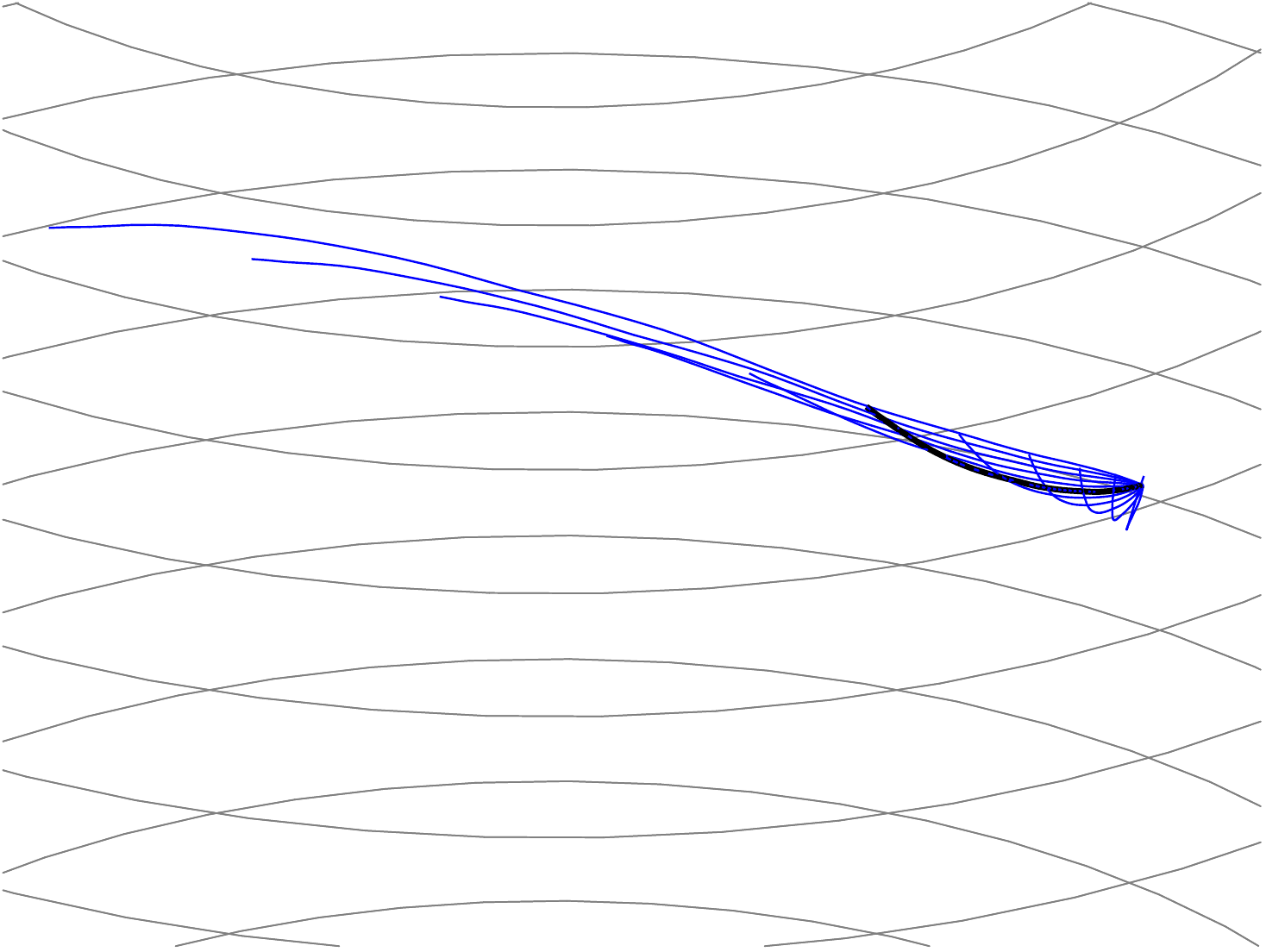}
	\end{center}
	\caption{The first two principal directions starting from the Karcher mean (black) of 150 hurricane tracks in $\mathcal{S}^{AC}([0,1],S^2)$ (first row) and in $\mathcal{S}^{AC}([0,1],S^2)/\operatorname{SO}(3)$ (second row). }
	\label{PrincipalDirection}
\end{figure}

{\small
	\bibliographystyle{ieee}
	\bibliography{egbib}
}

\end{document}